\theoremstyle{definition}
\newtheorem{theorem}{Theorem}
\newtheorem{prop}[theorem]{Proposition}
\newtheorem{lemma}[theorem]{Lemma}
\newtheorem{cor}[theorem]{Corollary}
\newtheorem{definition}[theorem]{Definition}
\newtheorem{exm}[theorem]{Example}
\newtheorem{alg}[theorem]{Algorithm}
\newtheorem{claim*}{Claim}
\newcommand{\odd}{\ensuremath{\textsc{odd}}}
\newcommand{\even}{\ensuremath{\textsc{even}}}
\newcommand{\floor}[1]{\ensuremath{\left \lfloor {#1} \right \rfloor}}
\newcommand{\ceil}[1]{\ensuremath{\left \lceil {#1} \right \rceil}}
\newcommand{\N}{{\mathbb N}}
\newcommand{\Z}{{\mathbb Z}}
\newcommand{\R}{{\mathbb R}}
\newcommand{\cL}{{\mathcal L}}
\newcommand{\dotcup}{\ensuremath{\mathaccent\cdot\cup}}
\begin{document}

\title{Linearly bounded liars, adaptive covering codes, and
deterministic random walks}
\author{%
Joshua N.\ Cooper \\
 \small University of South Caroloina,  Columbia, South Carolina\\
 \small{\tt cooper@math.sc.edu}
\and
Robert B.\ Ellis\thanks{Project sponsored by the National
Security Agency under Grant Number
\#H98230-07-1-0029. The United States Government is authorized
to reproduce and distribute reprints notwithstanding any copyright
notation herein.} \\
 \small Illinois Institute of Technology, Chicago, IL\\
 \small{\tt rellis@math.iit.edu}
}

\date{\today}
\maketitle

\begin{abstract}
We analyze a deterministic form of the random walk on the
integer line called the {\em liar machine}, similar to the
rotor-router model, finding asymptotically tight pointwise and interval discrepancy
bounds versus random walk. This provides an improvement in
the best-known winning strategies in the binary symmetric
pathological liar game with a linear fraction of responses
allowed to be lies. Equivalently, this proves the existence of
adaptive binary block covering codes with block length $n$,
covering radius $\leq fn$ for $f\in(0,1/2)$, and cardinality
$O(\sqrt{\log \log n}/(1-2f))$ times the sphere bound
$2^n/\binom{n}{\leq \lfloor fn\rfloor}$.
\end{abstract}

\section{Introduction}

In this paper we employ machinery of deterministic random walks
to produce an improved strategy in the pathological liar game
with a linearly bounded liar.  We also provide
discrepancy bounds of independent interest for a discretized
random walk which we call the ``liar machine''.  Liar games,
introduced by R\'enyi and Ulam \cite{R61,U76}, are played by a
questioner and responder, whom we can Paul and Carole,
respectively, according to tradition; they model search in the
presence of error. The original variant is like ``twenty
questions''  to identify a distinguished element of the search
space, except with lies; while in the pathological variant,
Carole lies as much possible, and Paul tries to preserve at
least one element of the search space. Winning strategies in
liar games correspond to adaptive codes, introduced by
Berlekamp \cite{B64}.  A primary objective in developing
winning strategies for liar games is to optimize the size of a
search space that can be processed given the number of
questions Paul can ask and a constraint on how Carole may lie.
Translated into coding theory language, this objective is to optimize
the size of a message set that can be handled given the number
of bits to be transmitted and a constraint on how noise can
corrupt the transmission.  Berlekamp's codes in \cite{B64} are
adaptive packing codes for error-correction, corresponding to
the original liar game, whereas the pathological liar game,
introduced by the second author and Yan \cite{EY04},
corresponds to adaptive covering codes.  For both the liar game
and adaptive coding viewpoints there is a theoretical size
limit on the search space, called the sphere bound, that
provides the target for optimization, often in terms of a
multiple of the sphere bound.

We combine two ideas to improve the best-known winning strategy
for the pathological liar game with Yes-No questions and a
linearly bounded liar. The first idea is to reduce the
pathological liar game to a chip-moving machine on the integer
line, which we call the liar machine, introduced implicitly by Spencer and
Winkler for the original liar game \cite{SW92}. The second is
to adapt the analysis of deterministic random walks on the
integers, developed by the first author, Doerr, Spencer, and
Tardos \cite{CDST07}, to the time-evolution of the liar
machine, and confirm a winning strategy in the pathological
liar game. Our main results are pointwise and interval
discrepancy bounds on the time-evolution of the liar machine as
compared to random walks on the integers, in Theorems
\ref{thm:ptwiseupperbound} and \ref{thm:intervalupperbound};
and an improved upper bound on the size of the search space for
which Paul can win the pathological liar game with Yes-No
questions and a linearly bounded liar, in Corollary
\ref{thm:pathGameBound}.

\section{Definitions and main results}

\subsection{The liar game and pathological variant}

The R\'enyi-Ulam liar game is an $n$-round 2-person
question-and-answer game on a search space
$[M]:=\{1,\ldots,M\}$.  A fixed integer parameter $e\geq 0$ is
the maximum number of {\em lies} an element of the search space
can accumulate before being disqualified, and the game begins
with an initial function $\ell:\{1,\ldots,M\}\rightarrow
\{0,1,\ldots,e\}$, representing the initial assignment of up to
$e$ lies to each $y\in [M]$.
As elements of $M$ are distinguished only by their number of
lies, we may ignore element labels and consider instead the
initial state vector $x_0=(x_0(0),x_0(1),\ldots,x_0(e))$, where
$x_0(i)=|\{y\in [M]:\ell(y)=i\}|$ is the number of elements of
$[M]$ initialized with $i$ lies. Most often we set
$x_0=(M,0,\ldots,0)$. Paul and Carole play an $n$-round game in
which Paul attempts to discover a distinguished element
$z\in[M]$ of the search space. To start each round, Paul weakly
partitions $[M]$ into two parts by choosing a {\em question}
$(A_0,A_1)$ such that $[M]=A_0\dotcup A_1$, where $\dotcup$
denotes disjoint union. We interpret this choice as the
question, ``Is $z\in A_0$?''.  Carole completes the round by
responding with her {\em answer}, an index $j\in \{0,1\}$. For
each $y\in [M]$, if $y\in A_j$, no additional lie is assigned
to $y$, but if $y\in A_{1-j}$, one additional lie is assigned
to $y$.  Any $y\in [M]$ accumulating $e+1$ lies is {\em
disqualified}. We interpret Carole's answer of $j=0$ as ``Yes''
and of $j=1$ as ``No''. Analogous to the definition of $x_0$,
for each $s=1,\ldots,n$, let the state vector
$x_s=(x_s(0),\ldots, x_s(e))$ record the number of elements
$x_s(i)$ that have $i$ lies after the end of round $s$.  Paul's
question $(A_0,A_1)$ in round $s$ corresponds to a question
vector $a_s=(a_s(0),\ldots,a_s(e))$ with $0\leq a_s(i)\leq
x_{s-1}(i)$ for all $0\leq i\leq e$, by letting $a_s(i)$ count
the number of elements in $A_0$ that have $i$ lies at the end
of round $s-1$. Define the right-shift operator $R$ on any
vector $x=(x(0),\ldots,x(e))$ by $R(x)=(0,x(0),\ldots,x(e-1))$.
Given $x_{s-1}$ and $a_s$,
define \\
\begin{eqnarray*}
Y(x_{s-1},a_s) & := & a_s+R(x_{s-1}-a_s), \\
N(x_{s-1},a_s) & := & x_{s-1}-a_s + R(a_s);
\end{eqnarray*}
and for each $s=1,\ldots,n$, set $x_s=Y(x_{s-1},a_s)$ if Carole
responds $j=0$ (``Yes'') in round $s$, and otherwise
$x_s=N(x_{s-1},a_s)$ if Carole responds $j=1$ (``No'') in round
$s$. Elements $y\in [M]$ that accumulate $e+1$ lies are shifted
out to the right and may be ignored for the rest of the game.
Paul wins the original liar game if
$\sum_{i=0}^{e}x_n(i)\leq 1$, that is, if all but at most one
element are disqualified after $n$ rounds; he wins the
pathological liar game if $\sum_{i=0}^{e}x_n(i)\geq 1$, that
is, if at least one element survives after $n$ rounds. We are
primarily interested in the pathological variant, which may be
interpreted as having a capricious Carole lying to eliminate
elements as quickly as possible, while Paul forms questions to
prevent all elements from being disqualified. We summarize the
pathological liar game as follows.
\begin{definition}
Let $n,M,e\geq 0$ be integers, and let $x=(x(0),x(1),\ldots,x(e))$
be a nonnegative integer vector with $\sum_{i=0}^{e} x(i)=M$.
Define the $(x,n,e)^*_2$-game to be the $n$-round pathological
liar game with Yes-No questions, initial configuration $x$,
and $e$ lies.  We say that Paul can win the $(x,n,e)^*_2$-game
provided there exists a winning strategy for Paul regardless of
Carole's responses.
\end{definition}
In the notation $(x,n,e)^*_2$, use of the asterisk indicates the
pathological variant of the liar game rather than the original.
The subscript 2 means that questions are binary and symmetric
with respect to replacing $a_s$ with $x_{s-1}-a_s$ while preserving
the same two vectors as candidates for $x_s$. This corresponds in
coding theory to the binary symmetric channel assumption; see \cite{EN09}
for a much broader class of
channel assumptions.

\subsection{The liar machine and the linear machine\label{sec:liarLinearDef}}

We define the ``liar machine'' as follows.  Start with some
configuration of chips on the even or odd integers (but not both).  Number the chips
$c_1,c_2,\ldots$ left-to-right.  At each location with, say,
$k$ chips, send $\floor{k/2}$ of the chips one step left, and
$\floor{k/2}$ one step right.  If one chip remains
(because $k$ is odd) we break the tie by sending the highest-indexed $c_j$ one step
left if $j$ is even or one step right if $j$ is odd.

Formally, define the ``starting configuration'' to be a map
$f_0 : \Z \rightarrow \N$ with finite support lying in $2\Z$ or $2\Z+1$.  Then, given
$f_t : \Z \rightarrow \N$, define $\chi_t : \Z \rightarrow
\{-1,0,1\}$ by
\begin{equation} \label{eq:chidef}
\chi_t(j) = \left \{ \begin{array}{ll} 0 & \textrm{ if } f_j \equiv 0 \pmod{2} \\ (-1)^{\sum_{i<j} \chi_t(i)} & \textrm{ if } f_j \equiv 1 \pmod{2}. \end{array} \right .
\end{equation}
Then we define
$$
f_{t+1}(j) = \frac{ f_t(j-1) + f_t(j+1)+ \chi_t(j-1) - \chi_t(j+1) }{2}.
$$

Now, we define the ``linear machine'' by taking $g_0 : \Z
\rightarrow \N$ to be any function.  Let the operator $\cL :
\Z^\Z \rightarrow \Z^\Z$ be defined by
$$
\cL g(j) = \frac{g(j-1)}{2} + \frac{g(j+1)}{2},
$$
and define $g_{t+1} = \cL g_t$.  Then $g_t(j)$ is just the
expected number of chips at location $j$ after a simple random
walk on $\Z$ starting from the configuration $g_0$.  In
particular, we expect $g_t$ and $f_t$ to be relatively close to
one another if $g_0 \equiv f_0$.  Also, define the operator
$\Delta: \Z^\Z \rightarrow \Z^\Z$ by
$$
\Delta f(j) = f(j-1).
$$
It is easy to see that $\cL$ and $\Delta$ are linear, and they
commute with each other.  We write $\delta_j \in \Z^\Z$ for the
function which is $1$ at $j$ and $0$ elsewhere. In order to
consider intervals in a configuration, for a set $S \subset \Z$
and a function $h : \Z \rightarrow \R$, define $h(S) = \sum_{i
\in S} h(i)$.

\subsection{Main results}

Our first two main results are a pointwise and an interval discrepancy bound
in the time-evolution of the liar machine versus the linear machine starting
with the same initial configuration.

\begin{theorem} \label{thm:ptwiseupperbound}
Let $f_0 \equiv g_0$, and define $f_t$ and $g_t$ according to
the evolution of the liar machine and linear machine, respectively, as
described above. Then
$$
|f_t(j) - g_t(j)| < 12  \log t
$$
for all $t \geq 2$, $j \in \Z$.
\end{theorem}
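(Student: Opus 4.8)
The plan is to track the discrepancy $e_t := f_t - g_t$ and express it as an accumulation of the ``rounding errors'' committed at each step of the liar machine, then control that accumulation via the smoothing properties of the linear operator $\cL$. Concretely, for each time $s$ the liar machine's update differs from $\cL f_s$ by the half-integer correction term $r_s(j) := \tfrac12(\chi_s(j-1) - \chi_s(j+1))$; since each $\chi_s$ takes values in $\{-1,0,1\}$, we have $|r_s(j)| \le 1$ and, crucially, $r_s$ is supported on the (few) sites relevant at time $s$ with a sign pattern governed by the prefix-parity rule in \eqref{eq:chidef}. Writing the recursion out, $f_{t+1} = \cL f_t + r_t$ while $g_{t+1} = \cL g_t$, so by induction
\[
  e_t = f_t - g_t = \sum_{s=0}^{t-1} \cL^{\,t-1-s} r_s .
\]
Thus $|f_t(j) - g_t(j)| \le \sum_{s=0}^{t-1} \bigl| (\cL^{\,t-1-s} r_s)(j)\bigr|$, and everything reduces to bounding how much a unit-bounded, oscillating perturbation injected $k := t-1-s$ steps in the past can contribute at a single site after $k$ applications of $\cL$.

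The key estimate is that $\cL^k$ has a smoothing/decay effect: $\cL^k \delta_0$ is (a rescaling of) the binomial distribution, so $(\cL^k \delta_0)(j) = 2^{-k}\binom{k}{(k+j)/2} = O(1/\sqrt{k})$ uniformly in $j$. Since $r_s$ is a sum of at most a bounded number of signed unit masses (its support at time $s$ is contained in the current chip-occupied region, but more to the point the consecutive differences of the $\chi$'s telescope), one gets $\bigl|(\cL^k r_s)(j)\bigr| = O(1/\sqrt{k})$ for each fixed $s$. Summing the naive bound $\sum_{k=1}^{t-1} O(1/\sqrt{k}) = O(\sqrt{t})$ is far too weak, so the real work is to exploit cancellation: because $\chi_s$ alternates in sign along its support, $r_s = \tfrac12(\chi_s(\cdot - 1) - \chi_s(\cdot+1))$ behaves like a second difference of a bounded function, and applying $\cL^k$ to a second difference gains an extra factor of order $1/k$ from the discrete heat-kernel estimate $\bigl|(\cL^k (\delta_0 - 2\delta_1 + \delta_2))(j)\bigr| = O(1/k^{3/2})$. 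This turns the sum into $\sum_{k \ge 1} O(1/k^{3/2}) = O(1)$ per unit of ``total variation'' of $\chi_s$ — but $\chi_s$ can have support of size growing with $s$, contributing a logarithmic factor when one sums the per-site contributions across the $O(\log t)$-width band where the bulk of the mass sits. Carefully bookkeeping these two competing effects — decay in $k$ versus growing support — is what produces the $\log t$ (rather than $\sqrt{t}$ or $O(1)$) final bound, and pinning down the constant $12$.

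I would organize the argument as follows. First, establish the exact identity $e_t = \sum_{s} \cL^{t-1-s} r_s$ and record $|r_s| \le 1$ together with the alternating-sign structure of $\chi_s$ inherited from \eqref{eq:chidef}. Second, prove the heat-kernel lemmas: uniform bounds on $\cL^k \delta_0$, on $\cL^k$ applied to a first difference, and on $\cL^k$ applied to a second difference, each with the appropriate power of $k$ — these follow from Stirling-type estimates on central binomial coefficients and are the technical backbone. Third, bound the support of $f_s$ (equivalently of $\chi_s$): since chips spread by at most one per step and mass is conserved, after $s$ steps the occupied region has width $O(s)$, but the part of it carrying all but a negligible fraction of the influence on site $j$ after $\cL^k$ is a window of width $O(\sqrt{k}\,)$; combined with the $1/k^{3/2}$ decay this is where the harmonic-series-like $\sum 1/k$ giving $\log t$ emerges. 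Fourth, assemble: split the sum over $s$ (equivalently over $k$) into the recent past $k \le t_0$ and the distant past $k > t_0$, bound each piece, optimize, and verify the numerical constant works out to below $12$ for all $t \ge 2$. The main obstacle I anticipate is the third step — getting cancellation from the oscillation of $\chi_s$ without losing too much, since a crude triangle inequality on $r_s$ destroys exactly the structure needed; the honest approach is to summation-by-parts $r_s$ against the smooth kernel $\cL^k\delta_j$ and use that the kernel's discrete second derivative is $O(k^{-3/2})$ while $\chi_s$ is bounded, so that the bound depends only on the kernel and not on the (large) support of $\chi_s$ except through a width-$O(\sqrt{k})$ effective window.
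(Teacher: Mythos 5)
Your high-level architecture matches the paper's: both start from the exact identity $f_t - g_t = \tfrac12\sum_{s=0}^{t-1}(\Delta-\Delta^{-1})\cL^s\chi_{t-1-s}$ and then exploit the oscillation of $\chi_s$ (a consequence of the prefix-parity tie-breaking rule) against the smoothness of the binomial kernel. The paper does this by writing $\chi_{t-1-s}=\sum_i(-1)^i\delta_{z_i}$, observing that $(\Delta-\Delta^{-1})\cL^s\delta_0$ is \emph{bimodal} on its support with maximum at most $3/s$ (Lemma~6), and invoking the elementary fact that an alternating sum over a bimodal function is at most four times its maximum; this is equivalent to the Abel-summation route you sketch, where one pairs the bounded prefix sum of $\chi_s$ against the total variation of the first-differenced kernel. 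So the route is essentially the same.

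However, the scaling analysis in your middle paragraph is wrong, and it hides the actual source of the $\log t$. You claim the second-difference kernel estimate $O(k^{-3/2})$ makes the $k$-sum converge to $O(1)$ "per unit of total variation of $\chi_s$," and then recover a $\log t$ from the growing support of $\chi_s$. That bookkeeping does not close: the total variation of $\chi_s$ over a support of size $m$ is $\Theta(m)$, which can grow linearly in $s$ and would give a bound far worse than $\log t$. The correct accounting is that the \emph{total variation of the first-differenced kernel} $(\Delta-\Delta^{-1})\cL^s\delta_0$ over all of $\Z$ is $\Theta(1/s)$ — pointwise $O(s^{-3/2})$ times the effective width $O(\sqrt{s})$ — and this $O(1/s)$ gets multiplied against the $O(1)$ bound on the prefix sums of $\chi_s$, not against its total variation. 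Summing $\sum_{s=1}^{t-1} O(1/s)$ is precisely where the $\log t$ comes from; there is no further split into recent/distant past and no optimization. Relatedly, your Abel-summation step says "use that \ldots $\chi_s$ is bounded," but boundedness of $\chi_s$ is not enough: you need boundedness of the running prefix sums (equivalently, that $\chi_s$ strictly alternates on its support), which is the nontrivial consequence of the tie-breaking rule in \eqref{eq:chidef}. Finally, the bimodality of $(\Delta-\Delta^{-1})\cL^s\delta_0$, which pins down the total variation and the constant $3/s$, is not a soft "heat-kernel estimate" but a separate monotonicity calculation on binomial ratios — the paper devotes Lemma~6 to it, and your sketch would need to supply it to make the constant $12$ work.
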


\begin{theorem} \label{thm:intervalupperbound}
Let $I=[a,b] \subset \Z$ and $f_0 \equiv g_0$, and define
$f_t(I)$ and $g_t(I)$ according to the evolution of the liar
machine and linear machine, respectively, as described above.   Then
$$
|f_t(I) - g_t(I)| \leq c^\prime \cdot \left \{
\begin{array}{ll} \sqrt{t} & \textrm{ if } B > \sqrt{t}/2 \\
    B \log (t/B^2) & \textrm{ if } B \leq \sqrt{t}/2, \end{array} \right .
$$
where $B = b-a$ and $c^\prime$ is an absolute constant.
\end{theorem}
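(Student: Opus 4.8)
The plan is to reduce the interval discrepancy to a weighted sum of pointwise discrepancies and then exploit cancellation. First I would set $e_t = f_t - g_t$, so $e_0 \equiv 0$ and, by the definitions of the two machines, $e_{t+1} = \cL e_t + b_t$, where the ``error injection'' $b_t$ at each step is supported on the parity class of $f_t$ and satisfies $b_t(j) = \tfrac12(\chi_t(j-1) - \chi_t(j+1)) - (\text{rounding in }\cL e_t)$; the crucial facts are that $\|b_t\|_\infty \le 1$ and, more importantly, that $b_t$ has \emph{bounded partial sums}: for any interval $J$, $|b_t(J)| \le 1$, since $\sum_{j \in J}(\chi_t(j-1)-\chi_t(j+1))$ telescopes to a difference of two $\chi$-values. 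Unwinding the recursion gives
$$
e_t(I) = \sum_{s=0}^{t-1} (\cL^{t-1-s} b_s)(I) = \sum_{s=0}^{t-1} \sum_{j} b_s(j)\, \big(\cL^{t-1-s}\1_I\big)(j),
$$
so it suffices to bound, for each $s$, the quantity $\sum_j b_s(j)\, p_{t-1-s}(I - j)$, where $p_r(I-j) = \Pr[\text{random walk of }r\text{ steps lands in }I - j]$ is the $r$-step transition probability of the interval.

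Next I would use summation by parts in $j$: writing $b_s(j) = (\beta_s(j) - \beta_s(j-1))$ for the partial-sum function $\beta_s(j) = b_s((-\infty, j])$, which is bounded by $1$ in absolute value, the sum becomes $\sum_j \beta_s(j)\,\big(p_{r}(I-j) - p_r(I-j-1)\big)$ with $r = t-1-s$. The inner difference is the discrete derivative of the interval-landing probability, namely $p_r((\text{one endpoint near }j)) $ type terms; more precisely $p_r(I-j)-p_r(I-j-1) = q_r(a-j) - q_r(b-j)$ where $q_r$ is the single-point $r$-step transition kernel. Hence each term of the $s$-sum is at most $\sum_j |q_r(a-j)| + |q_r(b-j)| = 2\sum_k q_r(k) \le 2$ trivially, but we need the finer bound $\sum_j |q_r(a-j) - q_r(b-j)|$ which, by a standard estimate on the total variation between two binomial kernels shifted by $B = b-a$, is $O(\min(1, B/\sqrt{r}))$. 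Plugging this in,
$$
|e_t(I)| \le C \sum_{r=0}^{t-1} \min\!\Big(1, \frac{B}{\sqrt{r}}\Big).
$$

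Finally I would evaluate this sum. When $B > \sqrt{t}/2$, every term is $O(1)$ only up to $r \approx B^2 \gtrsim t$, so the whole sum is $O(t)$ — but this is too weak, so in that regime one instead uses the cruder per-step bound $|(\cL^r b_s)(I)| \le \|b_s\|_1$-free argument: actually here I would bound $|e_t(j)| = O(\log t)$ pointwise via Theorem~\ref{thm:ptwiseupperbound} only when $B$ is small, and for $B > \sqrt t/2$ argue directly that $|e_t(I)| \le \sum_j |e_t(j)|$ restricted to the $O(\sqrt t)$ locations where the walk has non-negligible mass, giving $O(\sqrt t \cdot \log t)$ — which again is off by a log. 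The correct route, and the step I expect to be the main obstacle, is to avoid the union bound entirely and keep the telescoping/summation-by-parts structure \emph{inside} the time sum as well: the errors $b_s$ themselves evolve with bounded partial sums, and one must track how $\cL^r$ smooths a bounded-variation sequence, yielding $\|\cL^r \beta_s\|_\infty = O(1)$ with derivative $O(1/\sqrt r)$ uniformly. Granting that, for $B \le \sqrt t/2$ the sum $\sum_{r<B^2} 1 + \sum_{B^2 \le r < t} B/\sqrt r = O(B^2) + O(B\sqrt t)$; this is still not $B\log(t/B^2)$, which signals that the right bound on the inner term is not $\min(1,B/\sqrt r)$ but rather $\min(1, B/r)$ — coming from the fact that $q_r(a-j)-q_r(b-j)$ summed over $j$ telescopes \emph{again} because $b_s$ has bounded partial sums, saving an extra factor $\sqrt r$. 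With the inner bound $O(\min(1,B^2/r))$ one gets $\sum_{r<B^2}1 + \sum_{r\ge B^2} B^2/r = O(B^2) + O(B^2\log(t/B^2))$, and after dividing out the extra $B$ (absorbed because the $r<B^2$ range actually contributes only $O(B)$ once the $\beta_s$ cancellation is used once more), one lands on $O(B\log(t/B^2))$ and $O(\sqrt t)$ respectively. So the real work is a careful double summation-by-parts — once in space, once in time — with quantitative kernel estimates; the routine parts are the binomial tail bounds and the geometric-series evaluations.
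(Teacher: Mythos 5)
Your skeleton --- unroll the error recursion, then exploit the bounded partial sums of the injected error via summation by parts, and recognize that a \emph{single} Abel transform in space (giving a per-step bound of $\min(1,B/\sqrt s)$) is too weak, so a second cancellation is needed --- matches the paper's argument in spirit. The paper implements the cancellation as an alternating sum over the support of $\chi_{t-1-s}$ of the fixed kernel $h=(\Delta+1)(\Delta^B-1)\cL^s\delta_0$, and bounds that alternating sum by $4\max_j|h(j)|$ \emph{because $h$ is bimodal}; that is morally your ``double summation by parts''. However, you never state the correct per-step bound, and the three candidates you try do not close the argument.

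The bound you need is
$$\max_j\left|(\Delta+1)(\Delta^B-1)\cL^s\delta_0(j)\right| = O\!\left(\min\!\left(\tfrac{1}{\sqrt s},\ \tfrac{B}{s}\right)\right),$$
so that the per-step contribution is $O(\min(\sqrt s,B)/s)$; summing this over $s$ gives exactly $O(\sqrt t)$ when $B>\sqrt t/2$ and $O(B\log(t/B^2))$ when $B\le\sqrt t/2$. Your candidate $\min(1,B/r)$ sums to $\approx B+B\log(t/B)$, which has the wrong quantity inside the logarithm and, more seriously, blows up (relative to $\sqrt t$) when $B\gg\sqrt t$; your candidate $\min(1,B^2/r)$ sums to $\approx B^2\log(t/B^2)$, a full factor $B$ too large, and ``dividing out the extra $B$ after the fact'' is not a valid step. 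Note that your own heuristic --- ``saving an extra $\sqrt r$ over $\min(1,B/\sqrt r)$'' --- actually points to $\min(1/\sqrt r,B/r)$, not to either of the expressions you wrote; you dropped the $\sqrt r$ from the first branch.

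The substantive gap is that the second cancellation requires two facts about the smoothed kernel, and both need real proof: (i) its $\ell^\infty$ bound $\Theta(\min(1/\sqrt s,B/s))$ --- in particular, for $B\le\sqrt s$ one must show the maximum of $2^{-s}\bigl(\binom{s}{(s+j-B)/2}-\binom{s}{(s+j)/2}\bigr)$ is $\Theta(B/s)$, not merely $O(B/\sqrt s)$ --- and (ii) its \emph{bimodality}, which is what lets the alternating (or twice-Abel-summed) sum be controlled by the $\ell^\infty$ norm rather than by the much larger total mass. These are the content of Lemma~\ref{lemma:painful} in the paper and they are not routine; without them your plan does not yield the stated bound. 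With them, your approach would essentially coincide with the paper's proof.
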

In Corollary \ref{cor:mainlowerbound} we prove that Theorems
\ref{thm:ptwiseupperbound} and \ref{thm:intervalupperbound} are
tight up to a constant multiple for a general initial
configuration $f_0$. Corollary \ref{cor:liarGameToLiarMachine}
allows extraction of a winning strategy for the pathological
liar game from the time-evolution of the liar machine, yielding
the following improved bound for the pathological liar game.
\begin{theorem}\label{thm:pathGameBound}Let
$M=\frac{2^n}{\binom{n}{\leq \lfloor fn\rfloor}}
(4/(1-2f))c'\sqrt{\log \log n}(1+o(1))$, where $c'$ is the
constant from Theorem \ref{thm:intervalupperbound}. Then for
$n$ sufficiently large, Paul can win the
$((M,0,\ldots,0),n,\lfloor fn\rfloor)^*_2$-pathological liar
game with $M$ elements and $\lfloor fn\rfloor$ lies on the
binary symmetric channel.
\end{theorem}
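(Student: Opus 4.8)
The plan is to route the game through Corollary~\ref{cor:liarGameToLiarMachine}, reducing it to a statement about the liar machine, to benchmark that machine against the linear machine, and then to beat the liar machine's deterministic error using Theorem~\ref{thm:intervalupperbound}. For the reduction, track each element of the $((M,0,\ldots,0),n,\lfloor fn\rfloor)^*_2$-game by its displacement $p=2\ell-s$ after $s$ rounds, where $\ell$ is its current number of lies; the element is alive precisely while $p\le 2\lfloor fn\rfloor+1-s$. If Paul asks a \emph{balanced} question --- split each equal-$p$ class as evenly as possible --- then, whatever Carole answers, a class of size $k$ sends $\lfloor k/2\rfloor$ members to $p+1$ and $\lfloor k/2\rfloor$ to $p-1$, with one bit per round (Carole's answer) dictating where the leftover chip of each odd class goes. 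This is a liar machine up to the handling of odd classes, which is exactly the slack Corollary~\ref{cor:liarGameToLiarMachine} is designed to absorb. So it suffices to show: the liar machine started from $f_0=M\delta_0$ has, after $n$ steps, at least one chip (with a little room to spare) in the survival region $S:=\{\,j\in\Z:\ j\le 2\lfloor fn\rfloor-n\,\}$ --- i.e.\ at least one chip that has taken at most $\lfloor fn\rfloor$ right steps.

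For the linear machine with $g_0=f_0=M\delta_0$ we have $g_n(S)=M\,2^{-n}\binom{n}{\le\lfloor fn\rfloor}$, namely $M$ times the probability that a length-$n$ simple walk makes at most $\lfloor fn\rfloor$ up-steps. Since $f\in(0,1/2)$ is a fixed constant, the terms of this binomial tail decay geometrically with ratio tending to $f/(1-f)$, so $\binom{n}{\le\lfloor fn\rfloor}$ is $\frac{1-f}{1-2f}(1+o(1))$ times its largest term; substituting the prescribed value of $M$ yields
\[
g_n(S)\;=\;\frac{4c'}{1-2f}\,\sqrt{\log\log n}\,(1+o(1))\;\to\;\infty .
\]
Thus the linear machine already deposits $\omega(1)$ mass in $S$; the content is that the much coarser liar machine does not lose it.

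The heart of the matter is the transfer from $g_n(S)$ to $f_n(S)$, and this does \emph{not} come out of the two stated discrepancy bounds as written: Theorem~\ref{thm:ptwiseupperbound} gives only $12\log n$ per point, while Theorem~\ref{thm:intervalupperbound} applied to $S\cap[-n,n]$, an interval of length $\Theta(n)$, gives only $c'\sqrt n$ --- and both of these dwarf $\sqrt{\log\log n}$. The route I would take is a scale-by-scale telescoping. From the update rules, $f_{t+1}=\cL f_t+\eta_t$ where $\eta_t(j)=\tfrac12\big(\chi_t(j-1)-\chi_t(j+1)\big)$ is bounded by $1$, supported on sites adjacent to the odd-occupancy sites, and has $\sum_j\eta_t(j)=0$; iterating, $f_n-g_n=\sum_{t<n}\cL^{\,n-1-t}\eta_t$, so $(f_n-g_n)(S)=\sum_{t<n}\big(\cL^{\,n-1-t}\eta_t\big)(S)$. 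Since the walk kernel $\cL^{m}$ carried into the far tail $S$ varies slowly across the support of each $\eta_t$ while $\eta_t$ is zero-sum (and, thanks to the alternating tie-break, also alternates in sign along its support), each summand cancels heavily; grouping the times $t$ into dyadic blocks and bounding the residual of block $k$ by Theorem~\ref{thm:intervalupperbound} at time scale $\asymp t_k$ restricted to the $O(1)$-wide window of that scale that actually feeds $S$, one finds that only the $\Theta(\log\log n)$ blocks with $t_k$ above a $\mathrm{polylog}(n)$ threshold contribute materially and that their appropriately normalized contributions combine, in an $\ell^2$ fashion, to $O(\sqrt{\log\log n})$. Making this precise --- in particular ruling out that the deterministic tie-break can bias the per-scale residuals coherently in one direction across all $\Theta(\log\log n)$ relevant scales --- is the main obstacle, and it is where the $\sqrt{\log\log n}$ and the constant $c'$ of $M$ enter.

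Assembling the pieces, $f_n(S)\ge g_n(S)-O(\sqrt{\log\log n})=\big(\tfrac{4c'}{1-2f}-O(1)\big)\sqrt{\log\log n}\,(1+o(1))\ge 1$ for all sufficiently large $n$, the factor $4$ in $M$ being chosen exactly to leave a positive margin over the transfer loss --- and, together with the slack built into the reduction, over the $\lfloor fn\rfloor$-versus-$fn$ rounding and Carole's per-round tie-break choices, which are of lower order and are handled by Corollary~\ref{cor:liarGameToLiarMachine}. Hence the liar machine leaves a chip in $S$, and by that corollary Paul wins the $((M,0,\ldots,0),n,\lfloor fn\rfloor)^*_2$-game.
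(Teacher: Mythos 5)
Your reduction to the liar machine via Corollary~\ref{cor:liarGameToLiarMachine} and your computation $g_n(S)=\frac{4c'}{1-2f}\sqrt{\log\log n}\,(1+o(1))$ for the linear machine are both correct and match the paper. You also correctly observe that a one-shot application of either discrepancy bound to the survival window is hopeless, since that window has width $\Theta(n)$ and Theorem~\ref{thm:intervalupperbound} then yields only $O(\sqrt{n})$, far larger than $\sqrt{\log\log n}$. So far so good.

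The gap is the transfer step. You assert $|f_n(S)-g_n(S)|=O(\sqrt{\log\log n})$ for the specific initial state $f_0=M\delta_0$ and propose to establish it by a dyadic, $\ell^2$-style cancellation among the terms $\cL^{\,n-1-t}\eta_t(S)$. You yourself flag that ruling out coherent bias of the tie-breaks across the $\Theta(\log\log n)$ relevant scales is ``the main obstacle,'' and that obstacle is not resolved; it is not a routine gap. The decomposition you write, $f_n-g_n=\tfrac12\sum_{s}(\Delta-\Delta^{-1})\cL^{s}\chi_{n-1-s}$, bounded term by term over the full time range gives $\Theta(\sqrt{n})$ on an interval of width $\Theta(n)$, and Corollary~\ref{cor:mainlowerbound} shows that interval discrepancy of order $\sqrt{T}$ is genuinely achievable for some initial configurations. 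Nothing in the paper supplies square-root cancellation between time slices, and you would have to exploit the structure of $f_0=M\delta_0$ in a way your sketch does not specify.

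The paper's proof (via Theorem~\ref{ref:liarMachineBound}) takes a different and crucially cheaper route: a two-stage split $n=n_1+n_2$ with $n_1 = n - \floor{\frac{4}{(1-2f)^2}\log\log n}$ and $n_2=\Theta(\log\log n)$. At time $n_1$ it applies the \emph{pointwise} bound (Theorem~\ref{thm:ptwiseupperbound}); the additive $12\log n_1$ error is then a vanishing \emph{relative} error because Lemma~\ref{lem:relativeCDF} guarantees the linear-machine count at each relevant site at time $n_1$ is $\omega(\log n)$. It truncates to a lower-bound configuration $h_{n_1}\le f_{n_1}$, evolves it $n_2$ more steps under the linear machine, and uses Proposition~\ref{prop:HyperGeomApprox} (the hypergeometric bridge) to show this deposits $(1+o(1))c'\sqrt{n_2}$ mass in the target window. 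Only then is the \emph{interval} bound (Theorem~\ref{thm:intervalupperbound}) applied, and only to the short second stage, where the window width and time are both $\Theta(n_2)=\Theta(\log\log n)$, so the additive error is $c'\sqrt{n_2}=O(\sqrt{\log\log n})$ and can be absorbed. This is precisely where $\sqrt{\log\log n}$ and the constant $\frac{4}{1-2f}$ in $M$ come from, since $\sqrt{n_2}\sim\frac{2}{1-2f}\sqrt{\log\log n}$. The two-stage restart, the role of Lemma~\ref{lem:relativeCDF} in licensing the pointwise bound, and the hypergeometric gluing of Proposition~\ref{prop:HyperGeomApprox} are the ingredients missing from your proposal, and without them the argument does not close.
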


We now discuss the improvement provided by Theorem
\ref{thm:pathGameBound}. The previous best known bound on $M$
for $f\in (0,1/2)$ is Theorem 1 of \cite{DP86}, which in our
language bounds the smallest $M$ for which Paul can win the
$((M,0,\ldots,0),n,\lfloor f n \rfloor)_2^*$-game with a
restricted strategy (called ``non-adaptive'' in the literature)
of selecting all questions before any responses from Carole are
available.

\begin{theorem}[Delsarte and Piret]\label{thm:DelsartePiret}
Let $f\in (0,1/2)$. The minimum $M$ for which Paul can win the
$((M,0,\ldots,0),n,\lfloor fn \rfloor)_2^*$-game with the restriction
that all $n$ questions must be formed before any responses from Carole
are available is bounded by
$$
M \leq \left\lceil \frac{2^n}{\binom{n}{\leq \lfloor fn\rfloor}} n
    \log 2\right\rceil.
$$
\end{theorem}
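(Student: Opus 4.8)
The plan is to prove the Delsarte--Piret bound by exhibiting a non-adaptive strategy for Paul---equivalently, a covering code---via a direct greedy (probabilistic deletion) argument. Recall that in the non-adaptive pathological liar game with initial configuration $(M,0,\ldots,0)$ and $e = \lfloor fn \rfloor$ lies, Paul must fix all $n$ questions in advance; a question sequence is just an assignment to each of the $M$ elements of a binary codeword of length $n$ (the $s$-th bit recording whether that element lies in $A_0$ or $A_1$ in round $s$). Carole's response string is a word $w \in \{0,1\}^n$, and an element survives exactly when its codeword is within Hamming distance $e$ of $w$. Thus Paul wins the $((M,0,\ldots,0),n,\lfloor fn\rfloor)_2^*$-game non-adaptively if and only if the $M$ chosen codewords form a covering code of $\{0,1\}^n$ with covering radius $\leq \lfloor fn \rfloor$. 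So the theorem reduces to the classical statement that there is a binary covering code of length $n$ and radius $\lfloor fn \rfloor$ of size at most $\lceil 2^n n \log 2 / \binom{n}{\leq \lfloor fn\rfloor} \rceil$.

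First I would set up the standard covering argument: let $V = \binom{n}{\leq \lfloor fn \rfloor}$ denote the volume of a Hamming ball of radius $\lfloor fn \rfloor$ in $\{0,1\}^n$, so that $2^n / V$ is the sphere bound. Choose $M$ codewords $c_1,\ldots,c_M$ independently and uniformly at random from $\{0,1\}^n$. For a fixed word $w$, the probability that $w$ is \emph{not} covered by any $c_i$ is $(1 - V/2^n)^M \leq \exp(-MV/2^n)$. By a union bound over all $2^n$ words $w$, the expected number of uncovered words is at most $2^n \exp(-MV/2^n)$, which is strictly less than $1$ as soon as $M > 2^n \ln 2^n / V = 2^n \cdot n\ln 2 / V$. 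Hence for such $M$ there is a choice of $c_1,\ldots,c_M$ covering every word, and taking $M = \lceil 2^n n \log 2 / V \rceil$ (interpreting $\log$ as $\ln$, matching the paper's convention) suffices. This yields a non-adaptive winning strategy for Paul with exactly $M \leq \lceil 2^n n\log 2 / \binom{n}{\leq \lfloor fn\rfloor} \rceil$ elements, as claimed.

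The only genuinely delicate point is the translation between the game's rules (the shift operators $Y$ and $N$, disqualification after $e+1$ lies, and the requirement that a question vector satisfy $0 \le a_s(i) \le x_{s-1}(i)$) and the clean coding-theoretic picture of codewords-versus-received-word-at-Hamming-distance-$e$. This is where I would spend the most care: I would verify by induction on the round number that if Paul's non-adaptive strategy assigns element $y$ the codeword $c(y) \in \{0,1\}^n$ (with $s$-th coordinate $0$ iff $y \in A_0$ in round $s$), and Carole's responses through round $s$ form the prefix $w_{1:s}$, then $y$ has accumulated exactly $d_H(c(y)_{1:s}, w_{1:s})$ lies after round $s$ (and is still in play iff this is $\le e$). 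The non-adaptivity is essential here: it guarantees the partition in each round is determined in advance and does not depend on surviving elements, so the map $y \mapsto c(y)$ is well-defined up front and Carole's best play is simply to pick the $w$ minimizing the number of surviving elements, i.e.\ minimizing $\#\{y : d_H(c(y),w) \le e\}$. Once this correspondence is nailed down, Paul wins for \emph{every} $w$ precisely when the multiset $\{c(y)\}$ is a radius-$\lfloor fn\rfloor$ covering code, and the probabilistic bound above completes the argument.

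Finally I would note the one bookkeeping subtlety: the random codewords above may repeat, so strictly they form a multiset of size $M$; this is harmless since a covering multiset of size $M$ certainly contains a covering \emph{set} of size $\le M$, and the game with $M$ elements only becomes easier for Paul if some elements are ``identified'' (indeed Paul wins as long as \emph{at least one} survives, so duplicates only help). Thus the bound $M \le \lceil 2^n n \log 2 / \binom{n}{\le \lfloor fn\rfloor}\rceil$ stands, and since this is achieved by a non-adaptive strategy it a fortiori bounds the minimum $M$ over non-adaptive strategies, which is the statement of the theorem.
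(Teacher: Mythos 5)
Your proposal is correct. Note that the paper itself offers no proof of this statement: it is imported verbatim from Delsarte and Piret \cite{DP86}, so there is no internal argument to compare against. Your two-step argument is the standard one and it is sound: (i) the reduction of the non-adaptive $((M,0,\ldots,0),n,\lfloor fn\rfloor)_2^*$-game to the existence of a binary covering code of radius $\lfloor fn\rfloor$ is exactly right (each element's sequence of $A_0/A_1$ memberships is a codeword, a lie in round $s$ occurs precisely at a coordinate disagreement with Carole's response word $w$, and Carole's adaptivity is vacuous since survival depends only on the final $w$); and (ii) the random-choice union bound $2^n(1-V/2^n)^M<2^n e^{-MV/2^n}\le 1$ for $M\ge 2^n n\ln 2/V$ gives the claimed cardinality, with the strict inequality $1-x<e^{-x}$ covering the edge case where the ceiling is attained. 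Your handling of the multiset issue and of mid-game disqualification is also correct. The only substantive difference from \cite{DP86} is that their Theorem 1 is proved for random \emph{linear} codes (hence the title of their paper), whereas your random unstructured codewords suffice for the statement as quoted here, which imposes no linearity; your route is therefore slightly more elementary and fully adequate for the bound the present paper actually uses.
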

The quantity $2^n/\binom{n}{\leq \lfloor fn\rfloor}$ is called the sphere bound,
and so Theorem \ref{thm:pathGameBound} provides an improved {\em density} in the
best-known minimum $M$, from a linear to sub-logarithmic factor in $n$ times the
sphere bound.  The sphere bound is an immediate lower bound on $M$;
this can be seen by defining an appropriate weight function on the liar game
state which Carole greedily minimizes (cf.~\cite[Lemma 3]{EPY05}).
In Theorem \ref{thm:DelsartePiret}, the ``spheres'' are Hamming balls of radius
$\lfloor fn \rfloor$ that are used to cover the binary discrete hypercube
(Hamming space) of dimension $n$.  The equivalence of
wining strategies in the pathological liar game to coverings of
Hamming space by objects of size $\binom{n}{\leq \lfloor fn\rfloor}$ is
proved in Theorem 3.7 of \cite{EN09}.

We conclude the section by outlining the rest of the paper.
Section \ref{sec:liarMachineProofs} contains the proofs of the
liar machine discrepancy bounds: Theorems
\ref{thm:ptwiseupperbound} and \ref{thm:intervalupperbound},
and Corollary \ref{cor:liarGameToLiarMachine}.  Section
\ref{sec:distribution} proves several technical distributional
facts about the binomial and hypergeometric distributions
needed to bound the distribution of chips in the liar machine
(via discrepancy from the linear machine). Section
\ref{sec:reduction} reduces a strategy for Paul in the
pathological liar game to the liar machine and blends the
preceding results into Theorem \ref{thm:pathGameBound}. Section
\ref{sec:conclusion} contains open questions and closing
remarks.

\section{Proofs of liar machine discrepancy bounds\label{sec:liarMachineProofs}}

The proofs of Theorems \ref{thm:ptwiseupperbound} and
\ref{thm:intervalupperbound} flow directly from the definitions
in Section \ref{sec:liarLinearDef}, and resemble the arguments in \cite{CDST07}. A bound on, and the
bimodality in space of, a term that tracks the discrepancy
between the liar machine and the linear machine is deferred
until Lemma \ref{lemma:painful}.  Next, Lemma
\ref{lem:parityForce} shows that the parity of the number of
chips in the liar machine can be pre-selected for an arbitrary
product of intervals in space and time, by choosing an
appropriate initial configuration. This leads to a
complementary lower bound in Corollary \ref{cor:mainlowerbound}
on discrepancy for a general initial configuration.  We adopt the convention, here and throughout, that $\binom{a}{b}$ is zero unless $a$ and $b$ are nonnegative integers and $b \leq a$.

\begin{proof}[Proof of Theorem \ref{thm:ptwiseupperbound}] Evidently,
$$
f_{t+1} = \cL f_t + \frac{1}{2} (\Delta - \Delta^{-1}) \chi_t.
$$
Therefore, by the linearity of $\cL$ and the fact that it commutes with $\Delta$,
$$
f_t = \cL^{t} f_0 + \frac{1}{2} \sum_{s = 0}^{t-1}
    (\Delta - \Delta^{-1}) \cL^{s} \chi_{t-1-s}.
$$
Since $g_t = \cL^t g_0 = \cL^t f_0$,
\begin{align*}
2|f_t - g_t| &= \left | \sum_{s = 0}^{t-1} (\Delta - \Delta^{-1}) \cL^{s}
    \chi_{t-1-s} \right | \\
& \leq 2 + \sum_{s = 1}^{t-1} \left | (\Delta - \Delta^{-1}) \cL^{s} \chi_{t-1-s} \right |.
\end{align*}
Consider a fixed $s$.  Denote by $z_i$ the $i^\textrm{th}$
element of the support of $\chi_{t-1-s}$, with $z_0$ its
minimal element and $z_{i+1} > z_i$ for each $i$.  Note that the $z_i$ all have the same parity, by our assumption that the chips occupy only even or only odd integers.  Then
\begin{align}
\nonumber (\Delta - \Delta^{-1}) \cL^{s} \chi_{t-1-s} &= (\Delta - \Delta^{-1}) \cL^{s} \sum_{i} (-1)^i \delta_{z_i} \\
\nonumber &= \sum_{i} (-1)^i (\Delta - \Delta^{-1}) \cL^{s} \delta_{z_i} \\
\nonumber &= \sum_{i} (-1)^i (\Delta - \Delta^{-1}) \cL^{s} \Delta^{z_i} \delta_0 \\
\label{eq:sum1} &= \sum_{i} (-1)^i \Delta^{z_i} (\Delta - \Delta^{-1}) \cL^{s} \delta_0.
\end{align}
Note that
$$(\Delta - \Delta^{-1}) \cL^{s}
\delta_0(j) = 2^{-s} \left ( \binom{s}{(s+j-1)/2} - \binom{s}{(s+j+1)/2} \right ).
$$
Therefore, by Lemma \ref{lemma:painful}, $(\Delta - \Delta^{-1}) \cL^{s}
\delta_0$ is bimodal on its support.   This means that the alternating sum
$\sum_{i} (-1)^i \Delta^{z_i} (\Delta - \Delta^{-1}) \cL^{s}
\delta_0$ is bounded by at most four times the maximum (in absolute
value) of the quantity $(\Delta - \Delta^{-1}) \cL^{s}
\delta_0$, since the $z_i$ all have the same parity.  This maximum, by Lemma
\ref{lemma:painful}, is at most $3/s$.
Therefore,
\begin{align*}
|f_t - g_t| &\leq \frac{1}{2} \left ( 12 \sum_{s = 1}^{t-1} \frac{1}{s} + 2 \right ) \\
&\leq 12 \log t.
\end{align*}
\end{proof}

\begin{proof}[Proof of Theorem \ref{thm:intervalupperbound}] Without loss of generality, $I = \{1,\ldots,B\}$.  We may also assume that $B$ is even. Evidently,
$$
f_{t+1}(I) = \left (  \cL f_t(I) + \frac{1}{2} (\Delta - \Delta^{-1}) \chi_t(I) \right ).
$$
Therefore, by the linearity of $\cL$ and the fact that it commutes with $\Delta$,
$$
f_t(I) = \sum_{i = 1}^B \Delta^i \cL^{t} f_0(I) + \frac{1}{2}
    \sum_{s = 0}^{t-1} (\Delta - \Delta^{-1}) \cL^{s} \chi_{t-1-s}(I).
$$
Since $g_t(I) = \cL^t g_0(I) = \cL^t f_0(I)$,
\begin{align*}
2|f_t(I) - g_t(I)| &= \left | \sum_{s = 0}^{t-1} (\Delta - \Delta^{-1}) \cL^{s} \chi_{t-1-s}(I) \right | \\
& \leq 2 + \sum_{s = 1}^{t-1} \left | (\Delta - \Delta^{-1}) \cL^{s} \chi_{t-1-s}(I) \right |.
\end{align*}
Denote by $z_i$ the $i^\textrm{th}$ element of the support of $\chi_{t-1-s}$, with $z_0$ its minimal element and $z_{i+1} > z_i$ for each $i$.  Then
\begin{align}
\nonumber (\Delta - \Delta^{-1}) \cL^{s} \chi_{t-1-s}(I) &= (\Delta - \Delta^{-1}) \cL^{s} \sum_{i} (-1)^i \delta_{z_i}(I) \\
\nonumber &= \sum_{i} (-1)^i (\Delta - \Delta^{-1}) \cL^{s} \delta_{z_i}(I) \\
\nonumber &= \sum_{i} (-1)^i (\Delta - \Delta^{-1}) \cL^{s} \Delta^{z_i} \delta_0(I) \\
\nonumber &= \sum_{i} (-1)^i \Delta^{z_i} (\Delta - \Delta^{-1}) \cL^{s} \sum_{k=1}^B \Delta^{k} \delta_0 \\
\nonumber &= \sum_{i} (-1)^i \Delta^{z_i} \sum_{k=1}^B \Delta^{k} (\Delta - \Delta^{-1}) \cL^{s} \delta_0 \\
\nonumber &= \sum_{i} (-1)^i \Delta^{z_i} (\Delta^{B+1} + \Delta^B - \Delta - 1) \cL^{s} \delta_0 \\
\label{eq:sum2} &= \sum_{i} (-1)^i \Delta^{z_i} (\Delta + 1)(\Delta^{B} - 1) \cL^{s} \delta_0.
\end{align}
Note that $\cL^s \delta_0(j) = 2^{-s} \binom{s}{(s+j)/2}$, so that
$$
(\Delta^B - 1) \cL^{s} \delta_0(j) = 2^{-s} \left ( \binom{s}{(s+j-B)/2} - \binom{s}{(s+j)/2} \right ).
$$
By Lemma \ref{lemma:painful}, when $B = \Omega(\sqrt{s})$, the maximum possible value of the right-hand side is $\Theta(1/\sqrt{s})$; when $B = o(\sqrt{s})$, it is of order
$$
2^{-s} B \cdot \max_{j} \left ( \binom{s}{(s+j-1)/2} - \binom{s}{(s+j+1)/2} \right ) = \Theta(B/s).
$$
Since an alternating sum over a bimodal function like $(\Delta + 1)(\Delta^{B} - 1) \cL^{s} \delta_0$ is bounded by four times the maximum absolute value of that function,
\begin{align*}
|f_t(I) - g_t(I)| &\leq c \sum_{s = 1}^{t-1} \frac{\min(\sqrt{s},B)}{s+1} \\
&\leq c^\prime \cdot \left \{ \begin{array}{ll} \sqrt{t} & \textrm{ if } B > \sqrt{t}/2 \\
                                                B \log (t/B^2) & \textrm{ if } B \leq \sqrt{t}/2, \end{array} \right .
\end{align*}
for some absolute constants $c$ and $c^\prime$.
\end{proof}

\begin{lemma} \label{lemma:painful} There exists constants $c_1$, $c_2$, $c_3$, and $c_4$ so that the following holds for all $s \geq 1$ and even $B > 0$.  Define
$$
h_B(j) = 2^{-s} \left ( \binom{s}{(s+j-B)/2} - \binom{s}{(s+j)/2} \right ).
$$
Then, when $B \geq \sqrt{s}$,
$$
\frac{c_1}{\sqrt{s}} \leq \max_j \left |h_B(j) \right |\leq \frac{c_2}{\sqrt{s}},
$$
where the left-hand inequality holds for all $s \geq S$, some absolute constant.  When $B \leq \sqrt{s}$,
$$
\frac{c_3B}{s} \leq \max_j \left |h_B(j) \right |\leq \frac{c_4B}{s}.
$$
Furthermore, $h_B(j)$ is bimodal on its support.
\end{lemma}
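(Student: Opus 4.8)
The plan is to reduce the entire lemma to elementary facts about the sequence $\gamma_k := 2^{-s}\binom{s}{k}$, the $\Binomial(s,1/2)$ mass function. Since $B$ is even, $\cL^s\delta_0(j) = \gamma_{(s+j)/2}$, and, as already computed in the proof of Theorem~\ref{thm:intervalupperbound}, $h_B(j) = \gamma_{(s+j-B)/2} - \gamma_{(s+j)/2}$; identifying the parity class $\{\, j : j \equiv s \pmod{2}\,\}$ with $\Z$ through $j\mapsto k = (s+j)/2$ turns this into $h_B = \gamma * (\delta_{B/2} - \delta_0)$, the convolution of the binomial sequence with $\delta_{B/2}-\delta_0$. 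Three standard properties of $\gamma$ will carry the argument: $\gamma$ is log-concave (indeed a P\'olya frequency sequence); $\gamma$ is unimodal with $\max_k \gamma_k = \gamma_{\lfloor s/2\rfloor} = \Theta(s^{-1/2})$; and $\max_k|\gamma_{k+1}-\gamma_k| = \Theta(s^{-1})$, with in fact $|\gamma_{k+1}-\gamma_k| = \Theta(s^{-1})$ uniformly on a ``shoulder'' band $\{\, k : \tfrac18\sqrt s < \tfrac s2 - k < \sqrt s\,\}$ of width $\Theta(\sqrt s)$. The last two estimates follow from the local de Moivre--Laplace theorem together with a direct check of the finitely many small values of $s$.

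The magnitude bounds come from two observations. When $B\ge\sqrt s$, the upper bound is trivial: $|h_B(j)| \le \gamma_{(s+j-B)/2} + \gamma_{(s+j)/2} \le 2\max_k\gamma_k \le c_2 s^{-1/2}$. When $B\le\sqrt s$, telescoping gives
$$
h_B(j) = -\sum_{m=1}^{B/2}\bigl(\gamma_{(s+j)/2-m+1} - \gamma_{(s+j)/2-m}\bigr),
$$
hence $|h_B(j)| \le \tfrac B2 \max_k|\gamma_{k+1}-\gamma_k| \le c_4 B/s$. For the lower bounds: when $B\ge\sqrt s$, evaluate at the $j$ with $(s+j)/2 = \lfloor s/2\rfloor$, so $\gamma_{(s+j)/2} = \max_k\gamma_k = \Theta(s^{-1/2})$ while $\gamma_{(s+j-B)/2} = \gamma_{\lfloor s/2\rfloor - B/2} \le (1-\varepsilon)\max_k\gamma_k$ for an absolute $\varepsilon>0$ once $s$ is large, since $B/2 \ge \tfrac12\sqrt s$ is at least one standard deviation from the mode; this is exactly where the hypothesis $s\ge S$ is used, and it yields $|h_B(j)| \ge c_1 s^{-1/2}$. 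When $B\le\sqrt s$, instead pick $j$ with $(s+j)/2$ the integer nearest $\tfrac s2 - \tfrac14\sqrt s$; then all $B/2$ indices occurring in the telescoping identity fall inside the shoulder band above, so every summand has a common sign and magnitude $\Theta(s^{-1})$, and $|h_B(j)| = \Theta(B/s) \ge c_3 B/s$. The finitely many remaining small $s$ --- note $B$ even and positive forces $s\ge4$ --- are dispatched by observing $h_B\not\equiv 0$ (e.g.\ $h_B(-s) = -2^{-s}$) and shrinking $c_1,c_3$.

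Bimodality is where I would invoke structure: the variation-diminishing property of P\'olya frequency sequences, which says that if $\mathbf a$ is a totally positive sequence and $\mathbf b$ has finite support then $S^-(\mathbf a * \mathbf b) \le S^-(\mathbf b)$, where $S^-$ counts sign changes among the nonzero terms. Applied with $\mathbf a = \gamma$ and $\mathbf b = \delta_{B/2}-\delta_0$ (one sign change), this shows $h_B$ changes sign at most once. Applied to the lattice difference $k\mapsto h_B(2(k+1)-s) - h_B(2k-s)$, which a short computation identifies with $\gamma * (-\delta_{-1} + \delta_0 + \delta_{B/2-1} - \delta_{B/2})$ --- a convolving factor whose nonzero entries, read left to right, have signs $-,+,+,-$ and hence exactly two sign changes --- it shows that this difference changes sign at most twice, so $h_B$ has at most two interior local extrema. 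Thus $h_B$ on its support is monotone, unimodal, or bimodal, and in every case its support splits into at most two subintervals on each of which $h_B$ is single-signed and unimodal; this is precisely the ``bimodality'' used in the proofs of Theorems~\ref{thm:ptwiseupperbound} and~\ref{thm:intervalupperbound} to bound an alternating sum by four times the maximum absolute value. If one wishes to avoid quoting total-positivity theory, the same conclusion can be obtained by an elementary but longer route: express $h_B$ as a sliding-window sum of the discrete derivative $k\mapsto\gamma_{k+1}-\gamma_k$ (which is positive-unimodal followed by negative-unimodal, i.e.\ bell-derivative-shaped), and track its at most two sign changes window by window, separately treating $B/2$ small, comparable to, and large relative to the $\Theta(\sqrt s)$ gap between the two humps of that derivative.

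The part I expect to be most laborious --- presumably the source of the name ``painful'' --- is making the local de Moivre--Laplace estimates quantitative and uniform in $s$, so that one obtains genuine constants $c_1,c_3$ for the lower bounds rather than mere $\Theta$-statements, and, if the elementary version of the bimodality argument is used, the three-way sliding-window case analysis; the rest is bookkeeping around the identities above.
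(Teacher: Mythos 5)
Your magnitude bounds are in essence the paper's argument: after the reindexing $k=(s+j)/2$, the paper's auxiliary function $h_2(j-2i)$ \emph{is} your first difference $\gamma_{k-i}-\gamma_{k-i+1}$ of the binomial mass function, and the paper's telescoping $h_B(j)=\sum_{i=0}^{B/2-1}h_2(j-2i)$ is your telescoping sum. Both proofs cap $\max|h_B|$ by the max of $\gamma$ (giving $c_2/\sqrt s$), cap it by $(B/2)\max|h_2|$ when $B\le\sqrt s$ (giving $c_4B/s$), use the local CLT to obtain the lower bound at the mode when $B\ge\sqrt s$, and evaluate on a ``shoulder'' at distance $\Theta(\sqrt s)$ from the mode to get the lower bound $\Theta(B/s)$ when $B<\sqrt s$. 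The paper works on the right shoulder ($j=\sqrt s$) and you on the left ($k\approx s/2-\sqrt s/4$), but this is cosmetic; the $s\ge S$ proviso appears in both for the same reason, namely the $O(s^{-3/2})$ error term of the local CLT. Your observation that $\max_j|h_B(j)|\le\max_k\gamma_k$ directly (since $h_B$ is a difference of two nonnegative terms) is cleaner than your stated $2\max\gamma$ bound, and is what the paper actually uses to get $c_2=1$.

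Where you genuinely diverge is the bimodality claim. The paper shows it by a bare-hands computation: it writes $2^s\bigl(h_B(j-2)-h_B(j)\bigr)$ in closed form, recasts the positivity of this difference as a product inequality, and verifies directly that the inequality reverses at most once in each of the three regimes $j<1$, $1\le j\le B+1$, $j>B+1$, so that the discrete derivative of $h_B$ has at most two sign changes. You instead package $h_B$ as $\gamma*(\delta_{B/2}-\delta_0)$ and quote the variation-diminishing property of P\'olya frequency sequences: convolving with $\gamma$ cannot increase the number of sign changes, so $h_B$ itself changes sign at most once (matching $\delta_{B/2}-\delta_0$), and its lattice difference, which you correctly identify as $\gamma*(-\delta_{-1}+\delta_0+\delta_{B/2-1}-\delta_{B/2})$ with sign pattern $-,+,+,-$, changes sign at most twice. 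That is a valid and in fact shorter derivation, at the cost of invoking total-positivity machinery; the paper's argument is longer but self-contained. (Your parenthetical sketch of an elementary ``sliding-window'' alternative is not carried out, but the VDP route you lead with is complete and correct.) One small inaccuracy worth noting: even $B>0$ and $s\ge1$ do \emph{not} force $s\ge4$ in general (take $s=1$, $B=2$); what you presumably mean is that the regime $B\le\sqrt s$ forces $s\ge4$, and the other small-$s$ cases fall under $B\ge\sqrt s$, where the lower bound is only asserted for $s\ge S$ anyway, so nothing breaks.
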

\begin{proof} It is easy to see that
$$
h_2(j) = 2^{-s} \frac{j-1}{s+1} \binom{s+1}{(s+j)/2}.
$$
Therefore,
$$
\frac{h_2(j)}{\Delta^2 h_2(j)} = \frac{(j-1)(s-j+4)}{(j-3)(s+j)},
$$
which equals one when $j^2 - 4j - (s-2) = 0$, i.e., $j = 2 \pm \sqrt{s+2}$.  Then the maximum of $|h_2(j)|$ can be bounded by
\begin{align*}
2^{-s} \frac{|j_{\max}|-1}{s+1} \max_j \binom{s+1}{(s+j)/2} & \leq \frac{1 + \sqrt{s+2}}{s+1} \cdot \frac{1}{\sqrt{3(s+1)/2}} \\
& \leq \frac{1 + 2}{(s+1)\sqrt{2}} < \frac{3}{s}.
\end{align*}
Since $h_B(j) = \sum_{i=0}^{B/2-1} h_2(j-2i)$, it immediately follows that
$$
\max_j \left | h_B(j)\right | \leq \frac{B}{2} \max_j \left | h_2(j) \right | < \frac{3B}{2s},
$$
so we may take $c_4 = 3/2$.  Now, it is clear that
\begin{align*}
\max_j \left |h_B(j)\right | &< \max_j 2^{-s} \cdot \binom{s}{(s+j)/2} \\
&< \frac{1}{\sqrt{s}},
\end{align*}
so we may take $c_2 = 1$.

On the other hand, by a version of the Local Central Limit
Theorem (see, e.g., \cite[Thm.~1.2.1]{L91}), $2^{-s}
\binom{s}{(s+j)/2} = \sqrt{\frac{2}{\pi s}} \cdot e^{-j^2/2s} +
O(s^{-3/2})$, so that we have
$$
h_B(j) = \sqrt{\frac{2}{\pi s}} \cdot e^{-(j-B)^2/2s} - \sqrt{\frac{2}{\pi s}} \cdot e^{-j^2/2s} + O(s^{-3/2}).
$$
Hence, when $B \geq \sqrt{s}$,
\begin{align*}
\max_j \left | h_B(j) \right | &\geq |h_B(0)| \\
&= \left | \sqrt{\frac{2}{\pi s}} \cdot e^{-B^2/2s} -
    \sqrt{\frac{2}{\pi s}} +O(s^{-3/2})\right | \\
&\geq \sqrt{\frac{2}{\pi s}} \left | e^{-1/2} - 1 \right | + O(s^{-3/2})\\
&> \frac{1+o(1)}{4 \sqrt{s}},
\end{align*}
so we may take $c_1 = 1/4$ and $S$ sufficiently large.  Note that the error term $O(s^{-3/2})$ is uniform in $j$ and therefore the $o(1)$ does not depend on $B$.

When $B < \sqrt{s}$,
\begin{align*}
2^s h_B(j) &= \binom{s}{(s+j-B)/2} - \binom{s}{(s+j)/2} \\
&= \binom{s}{(s+j)/2} \left ( \prod_{i=1}^{B/2} \frac{s+j-B+2i}{s-j+2i} - 1 \right ) \\
&= \binom{s}{(s+j)/2} \left ( \prod_{i=1}^{B/2} \left ( 1 + \frac{2j-B}{s-j+2i} \right ) - 1 \right ),
\end{align*}
so we have
\begin{align*}
\max_j h_B(j) &\geq h_B(\sqrt{s}) \\
&= 2^{-s} \binom{s}{(s+\sqrt{s})/2} \left ( \prod_{j=1}^{B/2} \left ( 1 + \frac{2 \sqrt{s}-B}{s-\sqrt{s}+2j} \right ) - 1 \right ) \\
&\geq 2^{-s} \binom{s}{(s+\sqrt{s})/2} \left ( \left ( 1 + \frac{\sqrt{s}}{s} \right )^{B/2} - 1 \right ) \\
&\geq \frac{c_0}{\sqrt{s}} \cdot \frac{B}{2\sqrt{s}} = \frac{c_0 B}{2 s},
\end{align*}
so we can take $c_3 = c_0/2$.

Finally, we have
\begin{align*}
2^s(h_B(j-2) - h_B(j)) &= \binom{s}{(s+j-B)/2-1} - \binom{s}{(s+j-B)/2} \\
& \qquad - \binom{s}{(s+j)/2-1} + \binom{s}{(s+j)/2} \\
&= 2^s (h_2(j-B) - h_2(j))\\
&= \frac{j-B-1}{s+1} \binom{s+1}{(s+j-B)/2} - \frac{j-1}{s+1} \binom{s+1}{(s+j)/2}.
\end{align*}
This quantity is positive when
$$
(j-B-1) \binom{s+1}{(s+j-B)/2} > (j-1) \binom{s+1}{(s+j)/2},
$$
i.e.,
$$
(j-B-1) \prod_{i=1}^{B/2} (s+j-2i+2) > (j-1) \prod_{i=1}^{B/2} (s-j+2i+2).
$$
(We may assume that each term of both products is nonnegative.) When $1 \leq j \leq B+1$, this inequality cannot be satisfied, since the left-hand side is nonpositive and the right-hand side is nonnegative.  When $j > B+1$, the inequality is the same as
$$
\left (1-\frac{B}{j-1} \right ) \prod_{i=1}^{B/2} (s+j-2i+2) > \prod_{i=1}^{B/2} (s-j+2i+2).
$$
The left-hand side is nondecreasing in $j$ and the right-hand side is nonincreasing in $j$, so $h_B(j-2) - h_B(j)$ has at most one change of sign in this regime.  When $j < 1$, we have the condition
$$
\prod_{i=1}^{B/2} (s+j-2i+2) < \left (1 + \frac{B}{j-B-1} \right ) \prod_{i=1}^{B/2} (s-j+2i+2),
$$
where again the left-hand side is nondecreasing in $j$ and the right-hand side is nonincreasing in $j$, so $h_B(j-2) - h_B(j)$ has at most one more change of sign.  Therefore, $h_B(j)$ is bimodal on its support.

\end{proof}

\begin{lemma}\label{lem:parityForce} For each function
$g : \{0,\ldots,N-1\} \times \{0,\ldots,T-1\} \rightarrow \{0,1\}$,
there exists a chip-assignment function $f_0 : \Z \rightarrow \N$
so that, for all $0 \leq n < N$ and $0 \leq t < T$,
$$
f_t(n) \equiv g(n,t) \pmod{2},
$$
where $f_t$ is the state of the liar machine at time $t$ if $f_0$
is its initial state (i.e., at time $t=0$).
\end{lemma}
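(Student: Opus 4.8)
\emph{Proof proposal.} The plan is to build $f_0$ one binary digit at a time, exploiting that the parity of $f_t(n)$ is controlled by the low-order digits of $f_0$ near $n$. I would take $f_0$ supported on a finite interval $J$ slightly larger than $\{0,\dots,N-1\}$ --- $J=\{-(T-1),\dots,N+T-1\}$ will do --- write $f_0(m)=\sum_{k\ge 0}\beta_k(m)2^k$, and determine the digit vectors $\beta_0,\beta_1,\dots,\beta_{T-1}$ in turn, with all higher digits equal to $0$. One preliminary point: although the liar machine was defined for configurations living on a single parity class, its recurrence makes sense for an arbitrary finitely supported $f_0:\Z\to\N$ and keeps the state in $\N^\Z$ --- the numerator defining $f_{t+1}(j)$ is always even (a short case check on the parities of $f_t(j\pm1)$) and stays nonnegative --- so no parity restriction on $f_0$ is needed.

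The engine is the identity from the proof of Theorem~\ref{thm:ptwiseupperbound}, namely $f_t=\cL^{t}f_0+E_t$ where $E_t:=\tfrac12\sum_{s=0}^{t-1}(\Delta-\Delta^{-1})\cL^{s}\chi_{t-1-s}$, together with two observations. First, $\cL^{t}f_0=g_t$ is the linear machine, $g_t(n)=2^{-t}\sum_m\binom{t}{(t+n-m)/2}f_0(m)$. Second, $E_t$ depends on $f_0$ only through $f_0\bmod 2^{t}$: it is assembled from $\chi_0,\dots,\chi_{t-1}$, and each $\chi_s$ is a function of the parities $f_s(i)\bmod 2$, which --- unrolling the recurrence, each step consuming one factor of $2$ --- are determined by $f_0\bmod 2^{s+1}$. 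Hence replacing $f_0(m_0)$ by $f_0(m_0)\pm 2^{t}$ changes $f_t(n)$ by exactly $\pm\binom{t}{(t+n-m_0)/2}$ (it changes $g_t(n)$ by that and leaves $E_t(n)$ untouched). So, once $\beta_0,\dots,\beta_{t-1}$ are fixed, the constraint $f_t(n)\equiv g(n,t)\pmod 2$ is an affine equation over $\mathbb F_2$ in the bits $\beta_t(\cdot)$, in which the coefficient of $\beta_t(m_0)$ is $\binom{t}{(t+n-m_0)/2}\bmod 2$: only $m_0\in[n-t,n+t]$ occur, and $\beta_t(n+t)$ occurs with coefficient $\binom{t}{0}=1$. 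Processing $n=0,1,\dots,N-1$ in increasing order, the equation for $n$ is then solved for $\beta_t(n+t)$ in terms of bits already determined (the finitely many remaining ``boundary'' bits of $\beta_t$ are free and set to $0$); since $n+t$ lies strictly to the right of the support-windows $[n'-t,n'+t]$ of the equations for $n'<n$, this assignment leaves those equations intact.

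Running this for $t=0,1,\dots,T-1$ produces a valid $f_0$. The step that requires real care --- and the likeliest place for a slip --- is the second observation above: one must pin down exactly which low-order digits of $f_0$ determine $f_s\bmod 2$, hence $\chi_s$, hence $E_t$, given that $\chi_s(j)$ depends non-locally, through $\sum_{i<j}f_s(i)\bmod 2$, on the entire time-$s$ parity pattern left of $j$. The point that makes it work is merely that this data is already frozen by the time stage $t>s$ is reached, so it enters only the constant term of the stage-$t$ equations. The remainder --- checking that $J$ contains every site referenced and that each triangular system is solvable with the indicated free parameters --- is routine bookkeeping.
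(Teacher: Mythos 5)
Your proof is correct and takes essentially the same approach as the paper's: the paper's ``place $2^T$ new chips at site $n+T$'' construction is precisely your observation that $\beta_T(n+T)$ enters the equation for $f_T(n)\bmod 2$ with coefficient $\binom{T}{0}=1$ while leaving $E_T$ and the earlier-time constraints untouched. Your framing via the identity $f_t=\cL^t f_0 + E_t$ makes the triangular structure of the bit-assignment system more explicit, but the double induction (outer on $t$, inner on $n$ left-to-right) and the key fact that $2^t$ chips split exactly evenly for $t$ steps are the same as in the paper.
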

\begin{proof} We proceed by induction.  For $T = 1$, the result is immediate: we simply set $f_0 \equiv g(\cdot,0)$.  Suppose that the claim holds for $T$, i.e., there exists an $f_0$ so that $f_t$ agrees with $g(\cdot,t)$ in parity for each $t \in \{0,\ldots,T-1\}$.  Now we perform a second induction (on $n$) to show the following claim:
\begin{claim*} For each $n \in \{0,\ldots,N-1\}$, there exists a chip-assignment function $f^{(n)}_0 : \Z \rightarrow \N$ so that, for all pairs $(n^\prime,t)$ with $0 \leq n^\prime < N$ and $0 \leq t < T$ or $0 \leq n^\prime < n$ and $t = T$,
$$
f^{(n)}_t(n^\prime) \equiv g(n^\prime,t) \pmod{2},
$$
where $f^{(n)}_t$ is the state of the liar machine at time $t$ if
$f^{(n)}_0$ is its initial state.
\end{claim*}
Again, the claim is immediate for $n=0$ (given the inductive
hypothesis), since we can just let $f^{(0)}_0 = f_0$ from the top-level induction.
Suppose it holds for $n$.  If $f^{(n)}_T(n) \equiv g(n,T) \pmod{2}$,
then setting $f^{(n+1)}_0 = f^{(n)}_0$ clearly suffices to prove
the claim for $n+1$.  If, however, $f^{(n)}_T(n) \not\equiv g(n,T)
\pmod{2}$, then define $f^{(n+1)}_0$ by
$$
f^{(n+1)}_0 (k) = \left \{ \begin{array}{ll} f^{(n)}_0(k)
    & \textrm{if } k \neq n+T \\ f^{(n)}_0(k) + 2^T
    & \textrm{if }k=n+T. \end{array} \right .
$$
Then $f^{(n+1)}_t(k) \equiv f^{(n)}_t(k) \pmod{2}$ for $t < T$ and $0 \leq k < N$, since
the ``new'' $2^T$ chips placed at site $n+T$ at time $t = 0$ are split
exactly in half at each time $t < T$, so that
$2 | 2^{T-t} | f^{(n+1)}_t(k) - f^{(n)}_t(k)$ for all $t < T$.  For
$t = T$ and $k < n$, $f^{(n+1)}_t(k) = f^{(n)}_t(k)$, since the ``new''
chips can only occupy sites in $[n+T-t,n+T+t]$ at time $t$, which
for $T=t$ is the interval $[n,n+2T]$ not containing $k$.  Finally,
there is one chip added to site $n$ at time $T$, i.e.,
$f^{(n+1)}_T(n) = f^{(n)}_T(n) + 1$, because exactly one of the $2^T$
``new'' chips makes it to site $n$ after $T$ steps.  This means,
in particular, that $f^{(n+1)}_T(n) \equiv g(n,T) \pmod{2}$, completing
the induction.
\end{proof}

This ``parity forcing'' lemma implies that it is possible
to set the function $\chi_t(j)$ for any finite space-time interval to whatever we wish.
We may then conclude that Theorems \ref{thm:ptwiseupperbound} and
\ref{thm:intervalupperbound} are tight.

\begin{cor} \label{cor:mainlowerbound} Fix $T$, a nonnegative integer, and $N$, and integer.
There exists an $f_0 : \Z \rightarrow \N$ so that, letting
$g_0 \equiv f_0$, and defining $f_t$ and $g_t$ according to the
evolution of the liar machine and linear machine, respectively, we have
$$
|f_T(N) - g_T(N)| = \Omega(\log T).
$$
Fix an interval $I$ of any given length $B$.  Then there exists
an $f^\prime_0 : \Z \rightarrow \N$ so that, letting $g^\prime_0 \equiv f^\prime_0$,
and defining $f^\prime_t$ and $g^\prime_t$ according to the evolution of the
liar machine and linear machine, respectively, we have
$$
|f^\prime_T(I) - g^\prime_T(I)| = \Omega \left( \left \{
\begin{array}{ll} \sqrt{T} & \textrm{ if } B > \sqrt{T}/2 \\
    B \log (T/B^2) & \textrm{ if } B \leq \sqrt{T}/2 \end{array} \right .\right).
$$
\end{cor}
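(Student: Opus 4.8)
The plan is to use Lemma~\ref{lem:parityForce} to manufacture a single initial configuration whose induced functions $\chi_0,\ldots,\chi_{T-1}$ are each a \emph{single point mass}, placed so that the terms appearing in the identities from the proofs of Theorems~\ref{thm:ptwiseupperbound} and~\ref{thm:intervalupperbound} reinforce one another instead of cancelling. Recall that those proofs establish, for any $f_0 \equiv g_0$,
$$
2\bigl(f_T(N) - g_T(N)\bigr) = \sum_{s=0}^{T-1} (\Delta - \Delta^{-1}) \cL^{s} \chi_{T-1-s}(N),
$$
together with an analogous identity for an interval $I$ of length $B$ in which the kernel $(\Delta - \Delta^{-1})\cL^{s}$ is replaced by $(\Delta+1)(\Delta^{B}-1)\cL^{s}$. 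If $\chi_{T-1-s} = \delta_{z_s}$ for a single site $z_s$, then the $s$-th summand is exactly $\phi_s(N - z_s)$ in the pointwise case, where $\phi_s := (\Delta - \Delta^{-1})\cL^{s}\delta_0$, and, up to a sign and a shift depending only on $z_s$ and $I$, a single value of $\Phi_{s,B} := (\Delta+1)(\Delta^{B}-1)\cL^{s}\delta_0$ in the interval case.

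Assuming such configurations can be realized, I would choose for each $s \in \{1,\ldots,T-1\}$ the site $z_s$ to be a point at which $\phi_s$ (resp.\ $\Phi_{s,B}$) attains its positive maximum. By Lemma~\ref{lemma:painful} --- applied with $B = 2$ for the pointwise kernel, since $\phi_s(j) = h_2(j+1)$, and directly for the interval kernel, noting that the factor $\Delta + 1$ does not decrease the maximum because all the functions involved are supported on a single sublattice --- these maxima are bounded below by $c/s$ and $c \min(\sqrt{s},B)/s$ respectively, once $s$ exceeds an absolute constant. All summands are then equal and of the same sign, so
$$
|f_T(N) - g_T(N)| = \tfrac{1}{2}\sum_{s} \max \phi_s = \Omega\Bigl(\sum_{s=1}^{T-1} \tfrac{1}{s}\Bigr) = \Omega(\log T),
$$
and likewise $|f_T(I) - g_T(I)| = \Omega\bigl(\sum_{s=1}^{T-1} \min(\sqrt{s},B)/s\bigr)$, a sum which is $\Omega(\sqrt{T})$ when $B > \sqrt{T}/2$ and $\Omega(B\log(T/B^2))$ when $B \le \sqrt{T}/2$, matching the claim.

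The step I expect to be the main obstacle is \emph{realizing} $\chi_t = \delta_{z_t}$ exactly. Lemma~\ref{lem:parityForce} only prescribes the parities of $f_t$ on a finite box $\{0,\ldots,M-1\}\times\{0,\ldots,T-1\}$, and the configuration it produces adds batches of chips at sites to the right of the box; a priori these could drift outside the box and corrupt $\chi_t$ there, since $\chi_t(j)$ depends on the parity of the number of odd sites strictly to the left of $j$. I would handle this by taking $M$ large and placing $N$ --- and all the $z_s$, which lie within $O(\sqrt{T})$ of $N$ --- deep in the interior, with the leftmost $\Theta(T)$ columns prescribed to be empty, and similarly for $I$. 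One then checks that the evolving configuration never leaves the box before time $T$: the single chip of the base configuration $\delta_{z_0}$ and every batch inserted by the forcing construction stays within $O(T)$ of $N$, hence inside the controlled region, for all $t < T$; in particular no chip ever reaches a column to the left of the box. Consequently $f_t$ is odd at exactly the prescribed site $z_t$ and nowhere else, so $\chi_t = \delta_{z_t}$ with coefficient $+1$, as $z_t$ is then the unique --- hence leftmost --- odd site. The one remaining bookkeeping point is to choose the $z_s$ compatibly both with the single-sublattice constraint on the liar machine and with the sublattice on which each kernel is supported; a short parity check shows these constraints can be met simultaneously.
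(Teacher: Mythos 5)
Your proposal is correct in substance and follows essentially the same route as the paper: feed a hand-picked alternating function $\chi(\cdot,t)$ into the telescoping identity from the proofs of Theorems~\ref{thm:ptwiseupperbound} and~\ref{thm:intervalupperbound}, use Lemma~\ref{lemma:painful} to lower-bound each summand, and invoke Lemma~\ref{lem:parityForce} to realize the $\chi$'s via an initial chip configuration. The one genuine difference is that you specialize to $\chi_t = \delta_{z_t}$ (a single odd site at each time), which automatically carries sign $+1$ because the leftmost odd site always does; this lets you skip the paper's device of placing a conditional ``dummy'' chip at $N-T-1$ (or $\min(I)-T-1$) to fix the sign of the leftmost term of a general alternating $\chi$. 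The trade-off is that with a point mass you must land $N-z_s$ near the argmax of the kernel $\phi_s$ (resp.\ $\Phi_{s,B}$) and verify the parity compatibility, which you flag and which does check out; the paper instead just says ``choose $\chi$ to maximize the sum,'' letting the alternating structure and Lemma~\ref{lemma:painful} do the work.

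The one place you should tighten is the assertion that ``no chip ever reaches a column to the left of the box.'' As stated this is not automatic: Lemma~\ref{lem:parityForce} adds batches of $2^{T'}$ chips at sites $n+T'$, and those batches spread $T-1$ steps leftward during the evolution. The argument that saves it — and that the paper also leans on implicitly — is that a batch is only inserted at $(n,T')$ when $f_{T'}^{(n)}(n)$ has the wrong parity, which can only happen at sites already reached by existing chips; by induction over the construction, every batch is therefore placed within $O(T)$ of $z_0\approx N$, the whole configuration stays within $O(T)$ of $N$ for all $t<T$, and the zero-padded columns absorb the leftward spread so that $\chi_t$ is indeed $\delta_{z_t}$ with coefficient $+1$. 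Spelling out that induction would close the remaining gap; once it is in place, your proof is complete and matches the paper's.
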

\begin{proof}
The same argument applies for both claims: we can set the
number of chips at each location and time so that the sums in
the proof of Theorems \ref{thm:ptwiseupperbound} and
\ref{thm:intervalupperbound} are maximized, in view of the lower bounds given by Lemma 6.  In the first case, let
$\chi : \{N-T,\ldots,N+T\} \times \{0,\ldots,T-1\} \rightarrow
\{-1,0,1\}$ be chosen to maximize the sum (\ref{eq:sum1}); in the second case, let $\chi : \{\min(I)-T,\ldots,\max(I)+T\} \times \{0,\ldots,T-1\} \rightarrow \{-1,0,1\}$ be chosen to maximize the sum (\ref{eq:sum2}).  Note that this
requires that $\chi$ alternate in sign on the support of its first
argument.  Let $m_t$ be the minimum element of the support of
$\chi(\cdot,t)$. Define
$$
g(k,t) = \left \{ \begin{array}{ll}
\frac{1 - \chi(m_t,t)}{2} & \textrm{if }k = N-T-1 \\
|\chi(k,t)|               & \textrm{if }N-T \leq k \leq N+T \\
0                      & \textrm{otherwise},
\end{array} \right .
$$
in the first case, or else
$$
g(k,t) = \left \{ \begin{array}{ll}
\frac{1 - \chi(m_t,t)}{2} & \textrm{if }k = \min(I)-T-1 \\
|\chi(k,t)|               & \textrm{if }\min(I)-T \leq k \leq \max(I)+T \\
0                      & \textrm{otherwise},
\end{array} \right .
$$
in the second case.  Then, we may obtain the desired $f_0$ by
applying the preceding lemma to $g$.  Since the (possible) chip
at $k = N-T-1$ or $k = \min(I)-T-1$ can never even reach the
site $N$ or any of $I$ before time $T$, the relevant sums are
unaffected by this small modification.  However, the presence
of such a chip when appropriate ensures that $\chi_t(j) =
\chi(j,t)$ for each $(j,t) \in \{N-T,\ldots,N+T\} \times
\{0,\ldots,T-1\}$ or  $(j,t) \in \{\min(I)-T,\ldots,\max(I)+T\}
\times \{0,\ldots,T-1\}$ (where $\chi_t(j)$ is as defined in
(\ref{eq:chidef})).
\end{proof}

\section{Liar machine distributional bound\label{sec:distribution}}

We need several technical facts to obtain to obtain lower
bounds for the configuration of chips in the time-evolution of
the liar machine. Lemma \ref{cutoffLemma} shows that the
cumulative distribution of the binomial random variable drops
off sharply just below where it is evaluated.
Lemma \ref{lem:relativeCDF} shows that the ratio of the
evaluations at the same relative position of the cumulative
distributions of binomial random variables with a similar
number of trials is not too small.  This is needed to bound the
left tail of the liar machine from below.
Because for Theorem \ref{ref:liarMachineBound} we will run $n$
steps of the liar machine in two stages of $n_1$ and $n_2$
steps, respectively, terms of a hypergeometric distribution
arise. Theorem \ref{thm:hyperGeom} quotes a result on the
closeness of the median to the mean of a generalized
hypergeometric distribution from \cite{Si01}, specialized to
the hypergeometric distribution in Corollary \ref{mmCor}. Then
in Proposition \ref{halfprop} we show that for $r$ sufficiently
close to but below the mean $\mu$, asymptotically almost half
of the hypergeometric distribution lies below $r$.  This allows
transferring from a partial sum of hypergeometric distributions
in $n_1$ and $n_2$ to that of the binomial distribution in $n$,
in Proposition \ref{prop:HyperGeomApprox}.  This last result is
critical for Theorem \ref{ref:liarMachineBound} in negotiating
a lower bound on the number of chips between two stages in the
time-evolution of the liar machine, so that at least one chip
survives in a prescribed interval after $n$ rounds.

Throughout the section, we use the following notation. Let $n
\rightarrow \infty$, fix $f \in (0,1/2)$, and set
$$
n_1 = n - \floor{\frac{4}{(1 - 2f)^2} \log \log n}
$$
and $n_2 = n - n_1$. The numbers of rounds in the first and
second stages of the $n$-round liar machine, are $n_1$ and
$n_2$, respectively.
Define $F=\lfloor f n\rfloor$, $F_1 = \lfloor f n_1\rfloor$,
and $F_2 = F-F_1$.

\begin{lemma} \label{cutoffLemma}
For any integer sequence $n_3 = n_3(n) \rightarrow \infty$,
there is a function $\epsilon(n,f)$ with $\lim_{n \rightarrow
\infty} \epsilon(n,f) = 0$ so that
$$
\sum_{i = F - n_3}^{F} \frac{\binom{n}{i}}{\binom{n}{\leq F}}
    \geq 1 - \epsilon(n,f).
$$
\end{lemma}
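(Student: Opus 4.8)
The plan is to quantify the rate at which the binomial coefficients $\binom{n}{i}$ decay as $i$ decreases below $F = \lfloor fn \rfloor$, and show that a window of width $n_3 \to \infty$ captures all but a vanishing fraction of the cumulative mass $\binom{n}{\leq F}$. Since $f < 1/2$, the map $i \mapsto \binom{n}{i}$ is increasing on $\{0,\ldots,F\}$ for $n$ large (as $F/n \to f < 1/2$), so the largest term is $\binom{n}{F}$ and the terms fall off geometrically as we move left. Concretely, for $0 \le j \le F$ one has the ratio bound
$$
\frac{\binom{n}{F-j}}{\binom{n}{F}} = \prod_{\ell=0}^{j-1} \frac{F-\ell}{n-F+\ell+1} \le \left( \frac{F}{n-F+1} \right)^{j} \le \rho^{j},
$$
where $\rho = \rho(n,f) = F/(n-F+1)$ satisfies $\rho \to f/(1-f) < 1$ as $n \to \infty$; so for $n$ large we may fix some $\rho_0 \in (f/(1-f), 1)$ with $\rho \le \rho_0$.

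First I would use this geometric bound to control the tail below the window. Writing $T_{\text{low}} = \sum_{i=0}^{F-n_3-1} \binom{n}{i}$ and $T_{\text{win}} = \sum_{i=F-n_3}^{F} \binom{n}{i}$, the ratio bound gives
$$
\frac{T_{\text{low}}}{\binom{n}{F}} \le \sum_{j > n_3} \rho_0^{\,j} = \frac{\rho_0^{\,n_3+1}}{1-\rho_0},
\qquad
\frac{T_{\text{win}}}{\binom{n}{F}} \ge 1.
$$
Therefore
$$
\sum_{i=F-n_3}^{F} \frac{\binom{n}{i}}{\binom{n}{\le F}}
= \frac{T_{\text{win}}}{T_{\text{win}} + T_{\text{low}}}
\ge \frac{1}{1 + T_{\text{low}}/T_{\text{win}}}
\ge \frac{1}{1 + \rho_0^{\,n_3+1}/(1-\rho_0)}
\ge 1 - \frac{\rho_0^{\,n_3+1}}{1-\rho_0}.
$$
Setting $\epsilon(n,f) = \rho_0^{\,n_3(n)+1}/(1-\rho_0)$ (and, say, $\epsilon(n,f) = 1$ for the finitely many small $n$ where $\rho_0$ is not yet defined) gives a function with $\lim_{n\to\infty}\epsilon(n,f) = 0$ since $n_3 \to \infty$ and $\rho_0 < 1$, which is exactly the claimed bound.

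The only real subtlety — and the step I would be most careful about — is the uniformity of the choice of $\rho_0$: one needs $F/(n-F+1)$ to be bounded away from $1$ for all large $n$, which follows because $F/n \to f$ and $f < 1/2$ implies $f/(1-f) < 1$, but the convergence $\epsilon \to 0$ must not secretly depend on $n_3$ in a way that conflicts with "any integer sequence $n_3 \to \infty$." Since the bound above holds for every such sequence with the same $\rho_0$ (depending only on $f$ and a threshold in $n$), this is fine. A cleaner alternative, if one prefers to avoid tracking $\rho_0$ explicitly, is to invoke a Chernoff/large-deviation estimate for $\mathrm{Binomial}(n,1/2)$: the window $[F-n_3, F]$ misses only mass concentrated far below $F$, and since $F/n \to f < 1/2$ is itself already in the large-deviation regime, the ratio of the missed mass to $\binom{n}{\le F}$ decays, giving the same conclusion; but the elementary geometric-ratio argument above is self-contained and I would present that.
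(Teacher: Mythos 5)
Your proof is correct and follows essentially the same route as the paper: bound the ratio $\binom{n}{F-j}/\binom{n}{F}$ by a geometric term, sum the geometric series to control the mass below $F-n_3$, and subtract from $1$. The paper uses the slightly tighter explicit ratio $f/(1-f)$ directly (noting $F \le fn$ and $n-F+1 \ge (1-f)n$ give $\rho \le f/(1-f)$ outright, so your intermediate $\rho_0$ is not actually needed), arriving at $\epsilon(n,f) = (f/(1-f))^{n_3}\cdot\frac{1-f}{1-2f}$, but the structure of the argument is identical.
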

\begin{proof}
Note that
\begin{align*}
\frac{\binom{n}{F - t}}{\binom{n}{F}} &= \frac{F!(n-F)!}{(F-t)!(n-F+t)!} \\
&\leq \frac{F^t}{(n-F+1)^t}\\
&\leq \frac{(fn)^t}{((1-f)n)^t} = \left ( \frac{f}{1-f} \right)^t.
\end{align*}
Therefore,
\begin{align*}
\sum_{i = 0}^{F-n_3} \binom{n}{i} & \leq \sum_{i = 0}^{F-n_3}
    \binom{n}{F} \left ( \frac{f}{1-f} \right)^{F-i} \\
&\leq \binom{n}{F} \sum_{j = n_3}^{\infty} \left ( \frac{f}{1-f}
    \right)^j \\
&\leq \binom{n}{\leq F} \left ( \frac{f}{1-f} \right)^{n_3}
    \cdot \frac{1-f}{1-2f}.
\end{align*}
It follows that
\begin{align*}
\sum_{i = F-n_3}^F \frac{\binom{n}{i}}
    {\binom{n}{\leq F}} &= 1 - \sum_{i = 0}^{F-n_3-1}
    \frac{\binom{n}{i}}{\binom{n}{\leq F}}\\
& \geq 1 - \left ( \frac{f}{1-f} \right)^{n_3} \cdot \frac{1-f}{1-2f},
\end{align*}
which clearly tends to $1$ as $n \rightarrow \infty$, since $f < 1/2$ implies $f/(1-f) < 1$.
\end{proof}

\begin{lemma}\label{lem:relativeCDF}
There exists a function $\delta(n,f)$ with
$\lim_{n \rightarrow \infty} \delta(n,f) = 0$ so that
$$
\frac{2^n}{\binom{n}{\leq \floor{fn}}}
    \cdot \frac{\binom{n_1}{\floor{fn_1}}}{2^{n_1}}
    \geq (\log n)^{2 - \delta(n,f)}.
$$
\end{lemma}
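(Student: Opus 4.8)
Write $Q$ for the left-hand side of the asserted inequality. Since $2^n/2^{n_1}=2^{n_2}$, where $n_2=n-n_1=\floor{\tfrac{4}{(1-2f)^2}\log\log n}$, and writing $F=\floor{fn}$, $F_1=\floor{fn_1}$, we have $Q=2^{n_2}\binom{n_1}{F_1}\big/\binom{n}{\le F}$. The plan is to prove $Q\ge C_f(\log n)^2$ for some constant $C_f=C_f(f)>0$; this gives the lemma with $\delta(n,f)=\max\{0,\log(1/C_f)/\log\log n\}$, which tends to $0$. Throughout, $H(x)=-x\log_2 x-(1-x)\log_2(1-x)$ denotes the binary entropy function, and all occurrences of $\Theta(\cdot)$ and $O(\cdot)$ are allowed to hide constants depending on $f$.

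First I would strip the cumulative sum down to its top term, exactly as in the proof of Lemma~\ref{cutoffLemma}: for $n$ large $F<n/2$, so the ratios $\binom{n}{i-1}/\binom{n}{i}=i/(n-i+1)$ are at most $f/(1-f)<1$ for all $i\le F$, and summing the geometric series yields $\binom{n}{\le F}\le\frac{1-f}{1-2f}\binom{n}{F}$. Hence $Q\ge\frac{1-2f}{1-f}\,2^{n_2}\binom{n_1}{F_1}\big/\binom{n}{F}$. Next I would control the binomial ratio by Stirling's formula, which gives $\binom{m}{k}=\Theta(2^{mH(k/m)}/\sqrt m)$ uniformly for $k/m$ in a fixed compact subinterval of $(0,1)$; applied with $(m,k)=(n,F)$ and $(n_1,F_1)$, together with the Lipschitz bound $H(F/n)=H(f)+O(1/n)$ (so that $nH(F/n)=nH(f)+O(1)$, and likewise for $n_1$), this produces $\binom{n}{F}=\Theta(2^{nH(f)}/\sqrt n)$ and $\binom{n_1}{F_1}=\Theta(2^{n_1H(f)}/\sqrt{n_1})$. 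The decisive point is that in the ratio the square-root factors appear only as $\sqrt{n/n_1}=(1-n_2/n)^{-1/2}=1+o(1)$, since $n_2=O(\log\log n)=o(n)$; therefore $\binom{n_1}{F_1}/\binom{n}{F}=\Theta(2^{-n_2H(f)})\ge c_f\,2^{-n_2H(f)}$ for a constant $c_f>0$ and all large $n$, and so
$$Q\ \ge\ \frac{(1-2f)\,c_f}{1-f}\cdot 2^{\,n_2(1-H(f))}.$$

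Finally I would substitute the value of $n_2$. The arithmetic that makes the constant $4$ match the target exponent $2$ is the entropy estimate $1-H(f)\ge\frac{(1-2f)^2}{2\ln 2}$ for $f\in[0,\tfrac12]$ — this is Pinsker's inequality for $\mathrm{Bern}(f)$ versus $\mathrm{Bern}(\tfrac12)$, and can also be checked directly: $\psi(x):=1-H(\tfrac12-x)-\tfrac{2x^2}{\ln 2}$ satisfies $\psi(0)=0$ and $\psi'(x)=\tfrac{1}{\ln 2}\bigl(\ln\tfrac{1+2x}{1-2x}-4x\bigr)\ge 0$ on $[0,\tfrac12)$. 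Since $n_2>\tfrac{4}{(1-2f)^2}\log\log n-1$, it follows that
$$2^{\,n_2(1-H(f))}\ \ge\ 2^{\left(\frac{4}{(1-2f)^2}\log\log n-1\right)\frac{(1-2f)^2}{2\ln 2}}\ =\ 2^{-\frac{(1-2f)^2}{2\ln 2}}\cdot e^{\,2\log\log n},$$
using $2^{1/\ln 2}=e$; and $e^{2\log\log n}=(\log n)^2$ when $\log$ denotes the natural logarithm (and is no smaller otherwise). Combining the two displays gives $Q\ge C_f(\log n)^2$ with $C_f=\frac{(1-2f)c_f}{1-f}\,2^{-(1-2f)^2/(2\ln 2)}$, which completes the proof.

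The step that requires genuine care is the binomial-ratio estimate: a cheap bound such as $\binom{n_1}{F_1}\ge 2^{n_1H(f)}/(n_1+1)$ would drag a factor $1/\Theta(n)$ into $Q$, and no $\delta(n,f)\to 0$ lets $(\log n)^{-\delta}$ absorb $1/\Theta(n)$; one really must match the $\Theta(\cdot/\sqrt m)$ asymptotics on numerator and denominator so that the polynomial factors cancel, leaving only the harmless $\Theta$-constant. Everything else — the geometric-series truncation, and the benign factor $2^{-(1-2f)^2/(2\ln 2)}$ — is routine, since any positive constant is $(\log n)^{o(1)}$.
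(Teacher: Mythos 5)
Your proof is correct and follows essentially the same route as the paper's: truncate $\binom{n}{\le F}$ to $\binom{n}{F}$ by the geometric-series argument, estimate the ratio $\binom{n_1}{F_1}/\binom{n}{F}$ via the entropy/Stirling asymptotic $\binom{m}{\alpha m}=\Theta(2^{mH(\alpha)}/\sqrt m)$ so that the polynomial factors cancel, and finish by checking that $\frac{4\log 2}{(1-2f)^2}(1-H(f))\ge 2$. The one place you supply more detail than the paper is in verifying that last inequality (you prove it via Pinsker's bound $1-H(f)\ge\frac{(1-2f)^2}{2\ln 2}$, where the paper merely asserts ``it is easy to check''), and your closing remark about why a crude $\binom{m}{k}\ge 2^{mH(k/m)}/(m+1)$ bound would not suffice is a correct and worthwhile observation.
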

\begin{proof} First of all, note that
\begin{align*}
\frac{2^n}{\binom{n}{\leq F}} \cdot \frac{\binom{n_1}{F_1}}{2^{n_1}} =
    2^{n-n_1} \frac{\binom{n_1}{F_1}}{\binom{n}{F}}
    \cdot \frac{\binom{n}{F}}{\binom{n}{\leq F}}.
\end{align*}
Denote by $A$, $B$, and $C$ the three factors on the right-hand
side. Since
$$
n - n_1 = \floor{\frac{4}{(1-2f)^2} \log \log n} \geq
    \frac{4}{(1-2f)^2} \log \log n -1,
$$
we have
$$
A \geq \frac{1}{2} (\log n)^{4 \log 2/(1-2f)^2}.
$$
Then, applying the estimates from the proof of Lemma
\ref{cutoffLemma},
\begin{align*}
\binom{n}{\leq F} &= \binom{n}{F} \sum_{t = 0}^F
    \frac{\binom{n}{F-t}}{\binom{n}{F}} \\
&\leq \binom{n}{F} \sum_{t = 0}^\infty \left (\frac{f}{1-f}
    \right )^t = \binom{n}{F} \cdot \frac{1-f}{1-2f},
\end{align*}
so that $C \geq \frac{1-2f}{1-f}$.  Now, we use the fact that
$\binom{n}{\alpha n} = 2^{H(\alpha)n + O(1)}/\sqrt{n}$, where
$H(x) = -x \log_2 x - (1-x) \log_2 (1-x)$ is the entropy
function.  We may therefore write $B$ as
\begin{align*}
\frac{\binom{n_1}{F_1}}{\binom{n}{F}}
    &= 2^{H(f)(n_1 - n) + O(1)} \cdot \frac{\sqrt{n}}{\sqrt{n_1}}\\
&\geq \beta \left ( 2^{\frac{-4 H(f)}{(1-2f)^2} \log \log n} \right )\\
&= \beta (\log n)^{-\frac{4 H(f)\log 2}{(1-2f)^2}},
\end{align*}
where $\beta>0$ is an absolute constant. Combining these
bounds, we have
\begin{align*}
ABC \geq \frac{\beta}{2} \frac{1-2f}{1-f} (\log n)^{\frac{4 \log 2}
    {(1-2f)^2}(1-H(f))}.
\end{align*}
It is easy to check that $\frac{4 \log 2}{(1-2f)^2}(1-H(f)) > 2$ for all $f \in (0,1/2)$, from which the desired bound follows.
\end{proof}

\begin{lemma}
$$ \log \binom{n}{\leq \floor{fn}} = \Theta(n),$$
where the implicit constant depends on $f$.
\end{lemma}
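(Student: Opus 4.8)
The plan is to sandwich $\log \binom{n}{\leq \floor{fn}}$ between two linear functions of $n$ with positive slopes depending on $f$. For the upper bound, observe that $\binom{n}{\leq \floor{fn}} = \sum_{i=0}^{\floor{fn}} \binom{n}{i} \leq \sum_{i=0}^{n} \binom{n}{i} = 2^n$, so $\log \binom{n}{\leq \floor{fn}} \leq n \log 2 = O(n)$. (Alternatively, the sharper bound $\binom{n}{\leq \floor{fn}} \leq \binom{n}{\floor{fn}} \cdot \frac{1-f}{1-2f}$ established in the proof of Lemma~\ref{cutoffLemma} also gives a linear upper bound, since $\binom{n}{\floor{fn}} \leq 2^n$.)

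For the lower bound it suffices to retain a single term: $\binom{n}{\leq \floor{fn}} \geq \binom{n}{\floor{fn}}$. I will then invoke the entropy estimate $\binom{n}{\alpha n} = 2^{H(\alpha)n + O(1)}/\sqrt{n}$ already used in the proof of Lemma~\ref{lem:relativeCDF}, applied with $\alpha = \floor{fn}/n \to f$; since $f \in (0,1/2)$ we have $H(f) > 0$, hence $\log \binom{n}{\floor{fn}} = H(f)(\log 2)\,n + O(\log n) \geq \tfrac{1}{2} H(f)(\log 2)\,n$ for $n$ large, which is $\Omega(n)$. (If one prefers to avoid Stirling's formula entirely, the elementary inequality $\binom{n}{k} \geq (n/k)^k$ yields $\binom{n}{\floor{fn}} \geq (1/f)^{\floor{fn}}$ and thus $\log \binom{n}{\leq \floor{fn}} \geq \floor{fn}\log(1/f) = \Omega(n)$, since $\log(1/f) > 0$.)

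Combining the two bounds gives $\log \binom{n}{\leq \floor{fn}} = \Theta(n)$ with implicit constants depending only on $f$ (an upper constant of $\log 2$, and any lower constant below $H(f)\log 2$). There is essentially no obstacle: the only place the hypothesis $0 < f < 1/2$ is used — indeed $0 < f < 1$ would do — is to guarantee that $\binom{n}{\floor{fn}}$ grows exponentially, i.e.\ that the relevant entropy $H(f)$ (equivalently $\log(1/f)$) is strictly positive so that the lower constant does not degenerate to $0$.
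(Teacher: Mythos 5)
Your proof is correct and follows essentially the same route as the paper's, which likewise deduces the result "immediately" from the entropy estimate $\binom{n}{\floor{fn}} = 2^{H(f)n + O(\log n)}$ (invoked earlier in the proof of Lemma~\ref{lem:relativeCDF}). You merely make the sandwiching $\binom{n}{\floor{fn}} \leq \binom{n}{\leq\floor{fn}} \leq 2^n$ explicit, and add an optional Stirling-free lower bound; neither changes the substance.
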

\begin{proof} This follows immediately from the estimate
$$
\binom{n}{\floor{fn}} = 2^{H(f) n + O(\log n)}
$$
as in the proof of Lemma \ref{lem:relativeCDF}.
\end{proof}

The following result appears in \cite{Si01}.

\begin{theorem}\label{thm:hyperGeom}
Let an urn contain $R$ red balls and $B$ black balls.
Suppose each red ball has weight $w_\circ$ and each black has weight
$w_\bullet$. Suppose that the balls are selected one-by-one without
replacement where each as yet unselected ball is given a probability
of being selected at the next round that equals its current fraction
of the total weight of all unselected balls. Suppose $r$ and $b$
satisfy $r = R(1 - e^{-w_\circ \rho})$ and $b = B(1 - e^{-w_\bullet \rho})$,
for some fixed $\rho > 0$.  Let $r + b$ balls be drawn from the urn
as prescribed. Let $X_\circ$ be the number of red balls selected
by this random process, and let $X_\bullet$ be the number of black,
so that $X_\circ + X_\bullet = r + b$. Then $r^\prime = \ceil{r}$
or $\floor{r}$ and $b^\prime = \ceil{b}$ or $\floor{b}$ are the
medians of $X_\circ$ and $X_\bullet$, respectively.
\end{theorem}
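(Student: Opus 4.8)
The plan is to analyze the urn process by embedding it in continuous time. Equip each red ball with an independent clock ringing at an $\mathrm{Exp}(w_\circ)$ time and each black ball with one ringing at an $\mathrm{Exp}(w_\bullet)$ time, and declare that balls are selected in the order their clocks ring. By memorylessness of the exponential, given any set $S$ of not-yet-selected balls the next ball drawn is ball $i \in S$ with probability $w_i / \sum_{j \in S} w_j$, so this coupling reproduces exactly the weighted sampling-without-replacement rule in the statement. Under the embedding, the number of red clocks that have rung by time $\rho$ is $\Binomial\bigl(R, 1 - e^{-w_\circ \rho}\bigr)$ and, independently, the number of black clocks that have rung by time $\rho$ is $\Binomial\bigl(B, 1 - e^{-w_\bullet \rho}\bigr)$; thus $r$ and $b$ are exactly the means of these two binomials.

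Next I would pass from $X_\circ$ to order statistics of the clocks. Write $m = r' + b'$ for the integer number of balls drawn, let $R_{(1)} < \cdots < R_{(R)}$ be the ordered red ringing times and $B_{(1)} < \cdots < B_{(B)}$ the ordered black ones (independent of each other, with no ties almost surely), and adopt the conventions $B_{(0)} = 0$ and $B_{(\ell)} = \infty$ for $\ell > B$. A short counting argument shows that among the first $m$ balls drawn there are at least $j$ red ones precisely when $R_{(j)} < B_{(m-j+1)}$. Conditioning on the value of the independent variable $B_{(m-j+1)}$ and using $\Pr[R_{(j)} \le t] = \Pr[\Binomial(R, 1 - e^{-w_\circ t}) \ge j]$, one obtains
$$
\Pr[X_\circ \ge j] = \mathbb{E}\Bigl[\,\Pr\bigl[\Binomial\bigl(R, 1 - e^{-w_\circ B_{(m-j+1)}}\bigr) \ge j\bigr]\Bigr],
$$
an average, over the black order statistic, of a function of it that is monotone increasing; the same formula shows $B_{(m-j+1)}$ has its median at the time where the black binomial mean equals $m - j + 1$.

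The heart of the argument is then a sharp, non-asymptotic comparison at $j = r'$, where $m - j + 1 = b' + 1$. Because $r = R(1 - e^{-w_\circ \rho})$ and $b = B(1 - e^{-w_\bullet \rho})$, the inner binomial crosses probability $1/2$ essentially at $B_{(b'+1)} = \rho$, while $B_{(b'+1)}$ itself has its median essentially at $\rho$, so $\Pr[X_\circ \ge r']$ sits near $1/2$ and the median of $X_\circ$ is within $O(1)$ of $r$. To upgrade this to the exact conclusion---that the median is one of $\{\lfloor r \rfloor, \lceil r \rceil\}$---I would invoke the classical fact that a $\Binomial(n,p)$ variable has a median in $\{\lfloor np \rfloor, \lceil np \rceil\}$ (equal to $np$ when that is an integer), apply it both to the inner binomial and to the order statistic $B_{(b'+1)}$, and track how the unit shifts $j \mapsto j$ and $m - j + 1 \mapsto m - j + 1$ interact with the roundings of $r$ and $b$. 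By the red--black symmetry of the setup---$X_\bullet = m - X_\circ$, with the roles of $(R, w_\circ)$ and $(B, w_\bullet)$ interchanged---it suffices to bound $\Pr[X_\circ \ge \lceil r \rceil + 1]$ above by $1/2$ and to deduce the companion statement for $X_\bullet$ by symmetry.

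I expect this last step to be the main obstacle. Getting the median to within a bounded distance of $r$ is immediate from concentration, but pinning it to exactly the integer(s) flanking $r$ is genuinely delicate: one must control the $O(1)$ discrepancies between the median of $R_{(r')}$ and $\rho$, between the median of $B_{(b'+1)}$ and $\rho$, and between the binomial medians and their means, and then show that the net effect never pushes $\Pr[X_\circ \ge r']$ past $1/2$ in the wrong direction. This bookkeeping is precisely what makes the result of \cite{Si01} nontrivial; a self-contained proof would have to reproduce (or carefully cite) the sharp binomial-median estimates and chase the rounding through the order-statistic representation above.
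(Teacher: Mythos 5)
The paper does not prove Theorem~\ref{thm:hyperGeom}; it is quoted from Siegel~\cite{Si01} and used only through Corollary~\ref{mmCor}, so there is no in-paper proof to compare your attempt against. That said, your exponential-clock embedding is the right idea and is in fact the engine of Siegel's argument: attaching independent $\mathrm{Exp}(w_\circ)$ and $\mathrm{Exp}(w_\bullet)$ clocks to the red and black balls reproduces the weighted without-replacement scheme, identifies $r$ and $b$ as binomial means at time $\rho$, and converts $\{X_\circ \geq j\}$ into the order-statistic comparison $R_{(j)} < B_{(m-j+1)}$; all of those steps are correct as you state them.

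However, what you have written is a plan, not a proof, and you say so yourself. The step you defer --- showing that the accumulated $O(1)$ rounding discrepancies never push $\Pr[X_\circ \geq j]$ across $1/2$ in the wrong direction at the two critical values $j = \lfloor r \rfloor$ and $j = \lceil r \rceil + 1$ --- is not cleanup; it is the actual theorem. ``Immediate from concentration'' gets you that the median is within $O(1)$ of $r$, which is strictly weaker than the claimed conclusion that it is exactly $\lfloor r \rfloor$ or $\lceil r \rceil$, and nothing in your sketch rules out the median landing at $\lceil r \rceil + 1$, say. To close this you would need the sharp two-sided binomial median estimates applied \emph{simultaneously} to the inner $\Binomial\bigl(R, 1 - e^{-w_\circ t}\bigr)$ and to the distribution of the black order statistic $B_{(m-j+1)}$, together with the monotonicity of the inner probability in $t$ to make the expectation inequality go the right way at both endpoints; this interaction of roundings is precisely what makes~\cite{Si01} nontrivial, as you note. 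As written, this is a faithful reduction to that interaction but not a proof of the theorem, and citing~\cite{Si01}, as the paper does, remains the honest resolution.
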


By taking $w_\circ = w_\bullet$, i.e., $r/b = R/B$, this result
gives the median of the hypergeometric distribution.  If we let
$r+b = T$ be the total number of balls drawn, then this gives
$b = BT/(R+B)$, i.e., the mean of $X_\circ$.  Hence, we have
the following Corollary.

\begin{cor} \label{mmCor} If $\mu$ is the mean and $m$ the median
of a hypergeometric distribution, then $m = \ceil{\mu}$ or $m = \floor{\mu}$.
\end{cor}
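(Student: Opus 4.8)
The plan is to derive Corollary \ref{mmCor} as a direct specialization of Theorem \ref{thm:hyperGeom}, following the remark immediately preceding the corollary. First I would fix an arbitrary hypergeometric distribution: it is determined by a total of $N$ balls of which $B$ are black (the remaining $R = N - B$ red), together with a number $T$ of balls drawn uniformly at random without replacement; the count $X_\bullet$ of black balls drawn then has mean $\mu = BT/N$. The degenerate cases $T \in \{0, N\}$ or $B \in \{0, N\}$ are trivial, since the distribution is then a point mass located at the integer equal to its mean, so I may assume $0 < T < N$ and $0 < B < N$.

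Next I would put Theorem \ref{thm:hyperGeom} into the symmetric regime $w_\circ = w_\bullet$; as only weight ratios matter, take $w_\circ = w_\bullet = 1$. In that regime every as-yet-unselected ball has the same selection probability $1/(\text{number unselected})$, so the process in the theorem is exactly uniform sampling without replacement and $X_\circ, X_\bullet$ are the usual hypergeometric counts. It remains to choose $\rho > 0$ so that the quantities $r = R(1 - e^{-\rho})$ and $b = B(1 - e^{-\rho})$ in the theorem match our setting. Since $r + b = N(1 - e^{-\rho})$, solving $N(1 - e^{-\rho}) = T$ yields $e^{-\rho} = 1 - T/N \in (0,1)$, hence an admissible $\rho > 0$; with this choice $r + b = T$ and $b = B(1 - e^{-\rho}) = BT/N = \mu$ (and likewise $r = RT/N$ is the mean of $X_\circ$).

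Finally I would invoke the conclusion of Theorem \ref{thm:hyperGeom}: after drawing $r + b = T$ balls, $b^\prime = \ceil{b}$ or $\floor{b}$ is a median of $X_\bullet$; since $b = \mu$, this says exactly that the median $m$ of the hypergeometric distribution satisfies $m = \ceil{\mu}$ or $m = \floor{\mu}$, which is the claim. I do not anticipate a real obstacle; the only points warranting a line of care are (i) verifying that every hypergeometric distribution is captured by some admissible $\rho$, handled above via the substitution $e^{-\rho} = 1 - T/N$, and (ii) observing that even when the median is not unique, Theorem \ref{thm:hyperGeom} still confines every median to $\{\floor{\mu}, \ceil{\mu}\}$, which collapses to $m = \mu$ in the integer-mean case.
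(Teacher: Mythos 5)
Your proposal is correct and follows essentially the same route as the paper: the paper derives Corollary \ref{mmCor} precisely by specializing Theorem \ref{thm:hyperGeom} to equal weights $w_\circ = w_\bullet$, observing that $r+b=T$ forces $b = BT/(R+B)$, the mean. Your added care in exhibiting an admissible $\rho$ via $e^{-\rho}=1-T/N$ and in disposing of the degenerate cases is a welcome tightening of the paper's brief remark, but it is the same argument.
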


\begin{prop} \label{halfprop} Let $0 \leq r \leq fn_2$.  Suppose
that $fn_1 + r$ elements are drawn uniformly at random (without
replacement) from a set $S = S_1 \dotcup S_2$ with
$|S_1| = n_1$ and $|S_2| = n_2$.  Let $X$ denote the number of
such elements in $S_2$.  If $n_1,n_2 \rightarrow \infty$, there
is some function $h: \N \rightarrow \N$ with $h = \omega(1)$ so
that, for $r \geq fn_2 - h(n)$, we
have
$$
\Pr \left ( X \leq r \right ) \geq 1/2 - o(1).
$$
\end{prop}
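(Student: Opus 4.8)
The plan is to realize $X$ as a hypergeometric random variable and play the mean--median proximity of Corollary~\ref{mmCor} against an anticoncentration bound for that distribution. Drawing $fn_1+r$ elements without replacement from $S=S_1\dotcup S_2$ makes $X$ hypergeometric with population $n=n_1+n_2$, number of ``special'' elements $n_2=|S_2|$, and number of draws $d:=fn_1+r$; its mean is $\mu=n_2 d/n$. A one-line computation gives
$$
\mu-r=\frac{n_2(fn_1+r)}{n}-r=\frac{n_1(fn_2-r)}{n},
$$
so the hypotheses $fn_2-h(n)\le r\le fn_2$ force $0\le\mu-r<h(n)$; the threshold $r$ thus sits just below the mean. (The precise size of $h$, beyond $h=\omega(1)$, is fixed at the end; the eventual choice also has $h(n)=o(n)$, used below.)

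By Corollary~\ref{mmCor} there is a median $m$ of $X$ lying in $\{\floor{\mu},\ceil{\mu}\}$, so $\Pr(X\le m)\ge 1/2$ and $m\le\mu+1<r+h(n)+1$. If $m<r$ we are done; otherwise $0<m-r<h(n)+1$, and, $X$ being integer-valued,
$$
\Pr(X\le r)=\Pr(X\le m)-\Pr(r<X\le m)\ \ge\ \frac12-(m-r)\max_k\Pr(X=k)\ \ge\ \frac12-\bigl(h(n)+1\bigr)\max_k\Pr(X=k).
$$
So everything reduces to bounding the largest atom of $X$.

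Since $f\in(0,1/2)$ is fixed, $n_1=n(1-o(1))$, and $d=fn_1+r=fn(1+o(1))$ (using $r=fn_2(1+o(1))$ and $h(n)=o(n)$), the finite-population variance is
$$
\sigma^2=d\cdot\frac{n_1}{n}\cdot\frac{n_2}{n}\cdot\frac{n-d}{n-1}=\bigl(f(1-f)+o(1)\bigr)n_2,
$$
so $\sigma=\Theta(\sqrt{n_2})\to\infty$. A local limit theorem for the hypergeometric law --- equivalently, Stirling's formula applied to $\Pr(X=k)=\binom{n_2}{k}\binom{n_1}{d-k}/\binom{n}{d}$, or comparison with $\Binomial(n_2,d/n)$ --- gives $\max_k\Pr(X=k)=O(1/\sigma)=O(1/\sqrt{n_2})$. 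Substituting, $\Pr(X\le r)\ge\tfrac12-O\bigl(h(n)/\sqrt{n_2}\bigr)$. As $n_2=\floor{\frac{4}{(1-2f)^2}\log\log n}\to\infty$, we may take any $h:\N\to\N$ with $h=\omega(1)$ and $h(n)=o(\sqrt{n_2})$, for instance $h(n)=\ceil{n_2^{1/4}}$; the error term is then $o(1)$, yielding $\Pr(X\le r)\ge 1/2-o(1)$ for all $r\ge fn_2-h(n)$. The one step that needs real care is the uniform atom bound $\max_k\Pr(X=k)=O(1/\sqrt{n_2})$: one must verify that the implied constant does not deteriorate as $r$ ranges over $[\,fn_2-h(n),\,fn_2\,]$, which it does not because $d=fn(1+o(1))$ keeps $d/n$ bounded away from $0$ and $1$. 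Everything else is routine.
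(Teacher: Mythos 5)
Your proof is correct and structurally parallel to the paper's: both realize $X$ as hypergeometric with parameters $(n, n_2, d)$ where $d = fn_1+r$, both invoke Corollary~\ref{mmCor} to place a median $m$ at $\lfloor\mu\rfloor$ or $\lceil\mu\rceil$, and both bound $\Pr(r < X \le m)$ by (window width) $\times$ (largest atom). The genuine difference is the anticoncentration step. You bound $\max_k \Pr(X=k) = O(1/\sigma) = O(1/\sqrt{n_2})$ by a local limit theorem --- or, as you note, by Stirling, or by comparing to $\Binomial(n_2, d/n)$ after exploiting the hypergeometric's $K\leftrightarrow n$ symmetry --- and then take $h(n) = \lceil n_2^{1/4}\rceil$. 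The paper instead computes the consecutive ratio $p(k)/p(k-1)$ near the mean, shows it is $1 + O(n/(n_1 n_2)) = 1+o(1)$ uniformly, concludes that the number of indices with $p(k)$ comparable to the modal value is $\omega(1)$ so that $\max_k p(k) = O(1/g(n))$ for some unspecified $g = \omega(1)$, and then takes $h = \sqrt{g}$. Your route is quantitatively sharper and shorter once the LLT is granted; the paper's ratio argument is more elementary and self-contained, yielding only a qualitative $\omega(1)$ bound on the atom, which is all that is needed. You correctly flag the one delicate point, namely that the implied LLT constant must be uniform as $r$ ranges over $[fn_2 - h(n), fn_2]$; this holds because $d/n \to f \in (0,1/2)$ and $n_1/n \to 1$, so $\sigma^2 = (f(1-f)+o(1))n_2$ uniformly over that range.
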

\begin{proof} $X$ follows a hypergeometric distribution with parameters $n = n_1 + n_2$, $n_2$, and $R = fn_1 + r$.  Its expectation is therefore given by $\mu = \frac{n_2}{n}(fn_1+r) = n_2 R/n$.  Writing $p(k)$ for the probability that $X = k$, note that
\begin{align*}
p(k) &= \frac{\binom{n_2}{k}\binom{n_1}{R-k}}{\binom{n}{R}}.
\end{align*}
When $k = \mu + \Delta$, we have
\begin{align*}
\frac{p(k)}{p(k-1)} &= \frac{\binom{n_2}{k}\binom{n-n_2}{R-k}}{\binom{n_2}{k-1}\binom{n-n_2}{R-k+1}} \\
&= \frac{(k-1)!(n_2-k+1)!(R-k+1)!(n-n_2-R+k-1)!}{k!(n_2-k)!(R-k)!(n-n_2-R+k)!} \\
&= \frac{(n_2-k+1)(R-k+1)}{k(n-n_2-R+k)} \\
&= \frac{(n_2 - \frac{Rn_2}{n} - \Delta + 1)(R - \frac{Rn_2}{n} - \Delta + 1)}{(\frac{Rn_2}{n} + \Delta)(n - n_2 - R + \frac{Rn_2}{n} + \Delta)}\\
&= \frac{(1 - \frac{R}{n} + \frac{1-\Delta}{n_2})(1 - \frac{n_2}{n}+\frac{1-\Delta}{R})}{(1 - \frac{n_2}{n} - \frac{R}{n} + \frac{Rn_2}{n^2} + \frac{\Delta}{n})(1 + \frac{\Delta n}{Rn_2})}.
\end{align*}
Then,
\begin{align*}
\frac{p(k)}{p(k-1)} - 1 &= \frac{(1 - \frac{R}{n} + \frac{1-\Delta}{n_2})(1 - \frac{n_2}{n}+\frac{1-\Delta}{R})}{(1 - \frac{n_2}{n} - \frac{R}{n} + \frac{Rn_2}{n^2} + \frac{\Delta}{n})(1 + \frac{\Delta n}{Rn_2})} - 1\\
&= \frac{O ( \Delta n^2)}{n_2Rn (1 - \frac{n_2}{n} - \frac{R}{n} + \frac{Rn_2}{n^2} + \frac{\Delta}{n})(1 + \frac{\Delta n}{Rn_2})}.
\end{align*}
Since $n_1+n_2=n$, it follows that $n_2 + R \leq n$.  Therefore,
\begin{align*}
\frac{p(k)}{p(k-1)} - 1 &=O( \Delta ) \frac{n^2}{n_2Rn (\frac{Rn_2}{n^2}+ \frac{\Delta}{n})(1 + \frac{\Delta n}{Rn_2})} \\
&= O( \Delta ) \cdot \frac{n^2}{(n_2R + \Delta n)^2}\\
&= O( \Delta ) \cdot \left ( \frac{1}{\Delta + n_2R/n} \right )^2.
\end{align*}
The quantity $z/(z+a)^2$ is maximized when $z = a$, i.e, $z/(z+a)^2 = (4a)^{-1}$, so
$$
\frac{p(k)}{p(k-1)} - 1 = O \left ( \frac{n}{n_2R} \right ) = O \left ( \frac{n}{n_2n_1} \right ) = o(1).
$$
Therefore, as $n \rightarrow \infty$, the number of $k$'s so that $p(k)$ is within $1+o(1)$ of $p(\mu)$ grows without bound.  This implies that $p(k) = O(1/g(n))$ for some function $g : \N \rightarrow \N$ with $g = \omega(1)$ and all $k$.  If we let $h(n) = \sqrt{g(n)}$, the total probability that $r \leq X \leq \mu$ is $O(1/\sqrt{g(n)}) = o(1)$.  Since, by Corollary \ref{mmCor}, $\ceil{\mu}$ or $\floor{\mu}$ is the median of the hypergeometric distribution, this implies that $\Pr(X \leq r) \geq 1/2 + o(1)$.
\end{proof}

\begin{prop}\label{prop:HyperGeomApprox}
For $n$ tending to infinity and a fixed $f \in (0,1/2)$,
$$
\sum_{k=F_1}^{F} \sum_{s=F_1}^{k} \binom{n_1}{s}
    \binom{n_2}{k-s} = \left ( \frac{1}{2} + o(1) \right )
    \sum_{k=0}^{F} \binom{n}{k}
$$
\end{prop}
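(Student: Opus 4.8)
The plan is to read both sides of Proposition \ref{prop:HyperGeomApprox} as events under one probability measure and to reduce the statement to the behaviour of a conditioned hypergeometric variable. Let $\nu$ be the uniform measure on subsets $A\subseteq[n]$ with $|A|\le F$, so that $\Pr_\nu(|A|=k)=\binom{n}{k}/\binom{n}{\le F}$ and, conditioned on $|A|=k$, the statistic $s:=|A\cap[n_1]|$ is hypergeometric with population $n$, $n_1$ distinguished balls, and $k$ draws (its normalization $\sum_s\binom{n_1}{s}\binom{n_2}{k-s}=\binom{n}{k}$ is the Vandermonde convolution). Since $\binom{n_1}{s}\binom{n_2}{k-s}$ counts the $k$-subsets of $[n]$ meeting $[n_1]$ in exactly $s$ points, and terms with $k<F_1$ vanish, the left-hand side equals $\sum_{k=0}^{F}\binom{n}{k}\Pr(s\ge F_1\mid |A|=k)=\binom{n}{\le F}\,\Pr_\nu(s\ge F_1)$, while the right-hand side is $\bigl(\tfrac12+o(1)\bigr)\binom{n}{\le F}$. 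So the claim is equivalent to $\Pr_\nu(s\ge F_1)=\tfrac12+o(1)$.

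Next I would localize the sample size. By Lemma \ref{cutoffLemma}, for any $n_3=n_3(n)\to\infty$ the $\nu$-mass of $\{|A|<F-n_3\}$ is $o(1)$, so it suffices to show $\Pr(s\ge F_1\mid |A|=k)=\tfrac12+o(1)$ uniformly over $k\in[F-n_3,F]$ for a conveniently chosen slowly growing $n_3$. Fix such a $k$ and set $t:=k-s$, the sample count in the block of size $n_2$, which is hypergeometric with population $n$, $n_2$ distinguished balls, and $k$ draws; write $\mu_t=kn_2/n$ for its mean and $m_t$ for a median, so $|m_t-\mu_t|<1$ by Corollary \ref{mmCor}, and note $s\ge F_1\iff t\le k-F_1$.

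For the upper bound, $k-F_1-\mu_t=kn_1/n-F_1\le fn_1-\lfloor fn_1\rfloor<1$ (using $k\le F\le fn$), hence $k-F_1\le m_t+1$ and
$$\Pr(t\le k-F_1)\le\Pr(t\le m_t-1)+\Pr(t=m_t)+\Pr(t=m_t+1)\le\tfrac12+2\max_a\Pr(t=a);$$
since $t$ has variance $\Theta(n_2)\to\infty$, a local limit theorem for the hypergeometric distribution (equivalently, the point-probability estimates already carried out inside the proof of Proposition \ref{halfprop}) gives $\max_a\Pr(t=a)=o(1)$, so $\Pr(s\ge F_1\mid|A|=k)\le\tfrac12+o(1)$. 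For the lower bound I would quote Proposition \ref{halfprop}: drawing $k$ elements from a set split into blocks of sizes $n_1$ and $n_2$ makes $t$ equal in distribution to the variable $X$ there with $r=k-fn_1$, and since $fn_2-1-n_3\le r\le fn_2$, choosing $n_3\le h(n)-1$ (possible as $h=\omega(1)$) puts $r$ in the admissible range, whence $\Pr(t\le k-F_1)\ge\Pr\bigl(t\le\lfloor k-fn_1\rfloor\bigr)=\Pr(X\le r)\ge\tfrac12-o(1)$. Averaging the two bounds against the $\nu$-weights over $k\in[F-n_3,F]$ and absorbing the $o(1)$ tail from Lemma \ref{cutoffLemma} yields $\Pr_\nu(s\ge F_1)=\tfrac12+o(1)$.

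The delicate point is the anti-concentration $\max_a\Pr(t=a)=o(1)$, together with the compatibility of the $O(1)$ gap between $k-F_1$ and $\mu_t$ (needed for the upper bound) and of the window $h(n)$ supplied by Proposition \ref{halfprop} (needed for the lower bound) with the spread of $t$: because $n_2\sim\frac{4}{(1-2f)^2}\log\log n$ grows so slowly, $t$ is only barely spread out, so this estimate has to be done carefully — which is exactly the bookkeeping Proposition \ref{halfprop} isolates, and the reason I would lean on it rather than re-derive everything.
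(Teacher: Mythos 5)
Your reduction of the left-hand side to $\binom{n}{\leq F}\Pr_\nu(s\geq F_1)$ and your lower-bound argument --- localize $k$ to $[F-n_3,F]$ via Lemma \ref{cutoffLemma}, then invoke Proposition \ref{halfprop} for $k$ in that window --- is exactly the paper's argument, just phrased probabilistically rather than as a direct manipulation of the double sum. What you add is the matching \emph{upper} bound. The paper's own proof is in fact one-sided: it concludes only
$$
\sum_{k=F_1}^{F}\sum_{s=F_1}^{k}\binom{n_1}{s}\binom{n_2}{k-s}\geq\left(\tfrac12-o(1)\right)\sum_{k=0}^{F}\binom{n}{k},
$$
which is the only direction actually used downstream (in the proof of Theorem \ref{ref:liarMachineBound}), and it never establishes the $\leq$ direction implicit in the stated equality. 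Your upper bound is correct as written: the gap $k-F_1-\mu_t=kn_1/n-\lfloor fn_1\rfloor<1$ puts $k-F_1$ within one unit of the median (Corollary \ref{mmCor}), and the point-probability estimate $\max_a\Pr(t=a)=o(1)$ is precisely what the proof of Proposition \ref{halfprop} establishes (``$p(k)=O(1/g(n))$ for all $k$''), with the variance $\Theta(n_2)\to\infty$ giving the uniformity in $k\in[F-n_3,F]$. Your observation that $n_3$ should be chosen at most $h(n)-1$ (rather than the paper's fixed $n_3=\lceil\sqrt{2F_2}\rceil$, which only works if the implicit $h$ from Proposition \ref{halfprop} happens to dominate it) is also a small but genuine tightening of the bookkeeping. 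In short: same skeleton for the half the paper proves, plus a correct and non-trivial completion of the half it asserts but leaves out.
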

\begin{proof}
Let $n_3 = \lceil \sqrt{2 F_2}\rceil$. By Proposition
\ref{halfprop}, we have
$$
\frac{\sum_{s=F_1}^{k} \binom{n_1}{s}\binom{n_2}{k-s}}
    { \binom{n}{k}} \geq \frac{1}{2} - o(1),
$$
for $k \in [F-n_3,F]$ since the left-hand quantity represents
the probability, if a set of $k = F_1 + r$ elements is drawn
uniformly at random, $F_2 - n_3 \leq r \leq F_2$, that at most
$r$ of the points will be taken from the last $n_2$ of all $n =
n_1 + n_2$ elements. Therefore, by the above and then by Lemma
\ref{cutoffLemma},
\begin{align*}
\sum_{k=F_1}^{F} \sum_{s=F_1}^{k} \binom{n_1}{s}
    \binom{n_2}{k-s} &= \sum_{k=F-n_3}^{F} \sum_{s=F_1}^{k}
    \binom{n_1}{s}\binom{n_2}{k-s} \\
& \qquad + \sum_{k=F_1}^{F-n_3-1} \sum_{s=F_1}^{k}
    \binom{n_1}{s}\binom{n_2}{k-s}\\
& \geq \left ( \frac{1}{2} - o(1) \right ) \sum_{k=F-n_3}^{fn}
    \binom{n}{k} \\
& \geq \left ( \frac{1}{2} - o(1) \right ) \left ( 1 - o(1) \right )
    \sum_{k=0}^{F} \binom{n}{k} \\
& \geq \left ( \frac{1}{2} - o(1) \right ) \sum_{k=0}^{F} \binom{n}{k}.
\end{align*}
\end{proof}

\section{Reduction from liar machine to the pathological
liar game\label{sec:reduction}}


We now consider the alternating-question strategy for Paul, and
show that Carole has no better response strategy than always
assigning a lie to each of the odd-numbered chips. The
time-evolution of the chips under these question-and-response
strategies is equivalent, by
Cor.~\ref{cor:liarGameToLiarMachine}, to the liar machine.  We
then combine results of the previous sections to prove Theorem
\ref{thm:pathGameBound} on parameters for which Paul can win.

\begin{definition}[Position vector]
Given the state vector $x=(x(0),\ldots,x(e))$ of a liar game
with $M$ elements,
the {\em position vector} $u=u(x)=(u(1),u(2),\ldots,u(M))$
corresponding to $x$ is defined by
$$ u(j):=\min\left\{k:\sum_{i=0}^k x(i)\geq j\right\}. $$
\end{definition}
\begin{exm}
The position vector of a state vector essentially labels the
$M$ elements tracked by the state vector from left to right,
and records as $u(j)$ the number of lies associated with the
$j$th element.
\begin{eqnarray*}
\mathrm{If}\quad x & = & (2,0,1,3,0), \quad \mathrm{then}\\
    u=u(x) & = & (0,0,2,3,3,3).
\end{eqnarray*}
\end{exm}

Position vectors are monotonic increasing, and provided the
maximum number of lies is available (from context, for
example), the state vector can be recovered from the position
vector.
We analyze the round-by-round evolution of state vectors by
comparing their corresponding position vectors under the weak
majorization partial order, presented for analysis of the
original liar game by \cite{SW92}.

\begin{definition}[Partial order on position vectors] Let
$M\in\mathbb{Z}^+$, and let
$$U=\{(u(1),\ldots,u(M))\in
\mathbb{N}^M:u(1)\leq \cdots \leq u(M)\}$$
be the set of position vectors with $M$ entries. For $u,v\in
U$, we define the partial order $u\leq v$ provided for all
$1\leq k\leq M$, $\sum_{j=1}^k u(j)\leq \sum_{j=1}^k v(j)$.
\end{definition}

\begin{exm}
The partial order on position vectors gives $(0,2,2)\leq
(1,1,2)\leq (1,2,2)\leq (2,2,2)$.
\end{exm}

In order to analyze position vectors within the partial order,
it will be convenient to continue tracking disqualified
elements, with position at least $e+1$, in the position vector.
We do this with the understanding that disqualified elements
are dropped when converting back to the state vector. The {\em
alternating question} for Paul puts all elements tracked by an
even (odd) index in the position vector $u$ into $A_0$ ($A_1$).
The number of lies associated with each element is easily read
from the position vector. Carole's response either assigns an
additional lie to the elements indexed by the odd positions, to
obtain the new position vector $\odd(u)$, or assigns an
additional lie to the elements indexed by the even positions,
to obtain the new position vector $\even(u)$.

\begin{definition}[$\odd(u)$ and $\even(u)$]\label{def:oeu}
Given the position vector $u=(u(1),\ldots,u(M))$, define the
position vector $\odd(u)$ to be the result of sorting
$(u(1)+1,u(2),u(3)+1,u(4),\ldots,u(M)+(M\mod 2))$ in
nondecreasing order, and define the position vector $\even(u)$
to be the result of sorting
$(u(1),u(2)+1,u(3),u(4)+1,\ldots,u(M)+(M+1\mod 2))$ in
nondecreasing order.
\end{definition}

The following two properties appear in the proof of Lemma 2 of
\cite{SW92}.  There is a minor error in the proof of the second
property which we describe and correct after stating the lemma.

\begin{lemma}\label{lem:eUeV}
Let $u$ and $v$ be position vectors of liar games with the same
number of elements on the
binary symmetric channel. Then\\
\noindent (1) \ $\even(u) \leq \odd(u)$, and\\
\noindent (2) \ If $u\leq v$, then $\even(u)\leq \even(v)$.
\end{lemma}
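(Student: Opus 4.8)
The plan is to verify both parts directly from Definition \ref{def:oeu}, working with the defining partial-sum inequalities of the weak majorization order. For part (1), I would compare $\even(u)$ and $\odd(u)$ entry by entry before sorting: the two vectors $(u(1),u(2)+1,u(3),u(4)+1,\ldots)$ and $(u(1)+1,u(2),u(3)+1,u(4),\ldots)$ differ only in that the ``$+1$'' is attached to the even-indexed coordinates in the former and the odd-indexed ones in the latter, and in both cases exactly $\lceil M/2\rceil$ or $\lfloor M/2\rfloor$ coordinates get incremented. Since sorting is order-preserving for the majorization order (rearranging a vector into nondecreasing order only decreases its prefix sums, and if one pre-sorted vector is coordinatewise dominated in a suitable sense by another the sorted versions are still comparable), it suffices to exhibit a matching of the $+1$'s in $\even(u)$ with $+1$'s in $\odd(u)$ sitting at no-larger sorted positions. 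Concretely, the $k$th prefix sum of $\odd(u)$ counts how many of the first several coordinates (in sorted order) carry increments; because an increment on $u(2i-1)$ can only ``move past'' $u(2i)$ (they differ by at most the increment), one shows $\sum_{j=1}^k \odd(u)(j) \ge \sum_{j=1}^k \even(u)(j)$ for every $k$ by a direct counting argument on which of the two members of each consecutive pair $\{u(2i-1),u(2i)\}$ receives the extra lie.

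For part (2), assume $u \le v$, i.e.\ $\sum_{j=1}^k u(j) \le \sum_{j=1}^k v(j)$ for all $k$, and I want the same for $\even(u)$ and $\even(v)$. The key observation is that $\even$ is the composition of two operations: first add the fixed $0/1$ pattern $p = (0,1,0,1,\ldots)$ coordinatewise, then sort into nondecreasing order. Adding the \emph{same} vector $p$ to both $u$ and $v$ preserves all prefix-sum inequalities trivially, since $\sum_{j=1}^k (u(j)+p(j)) \le \sum_{j=1}^k (v(j)+p(j))$. So the whole content is: if $a \le b$ in the prefix-sum order and $a,b$ are both nondecreasing except that we have perturbed them by a $0/1$ pattern, then $\mathrm{sort}(a) \le \mathrm{sort}(b)$. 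This is a standard fact — sorting a vector is the ``greedy'' rearrangement that minimizes every prefix sum among all permutations, hence $\sum_{j=1}^k \mathrm{sort}(a)(j) \le \sum_{j=1}^k a(j)$ always, and one needs the complementary bound $\sum_{j=1}^k \mathrm{sort}(a)(j) \le \sum_{j=1}^k b(j)$. I would get this by noting that $\mathrm{sort}(a)(1)+\cdots+\mathrm{sort}(a)(k)$ equals the sum of the $k$ smallest entries of $a$, which is at most the sum of \emph{any} $k$ entries of $a$; choosing those $k$ entries to be the ones whose indices, after sorting $b$, fall in the first $k$ positions, together with $a\le b$ coordinatewise-after-correcting-for-the-pattern, yields the bound. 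This is precisely the place where the ``minor error'' in \cite{SW92} occurs, so I would be careful to track how the $0/1$ pattern interacts with the sorting — the increment on a coordinate of $u$ can push it past the next coordinate, so the permutation realizing $\mathrm{sort}(u+p)$ need not be the one realizing $\mathrm{sort}(v+p)$, and the argument must allow for that discrepancy.

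The main obstacle I anticipate is exactly this bookkeeping in part (2): making rigorous the claim that ``sort preserves the majorization order'' when the vectors being sorted are not themselves sorted but are near-sorted perturbations, and pinning down the correct matching of indices between the sorted versions of $u+p$ and $v+p$. The clean way to handle it is the abstract lemma that for \emph{any} real vectors $a,b$ (not necessarily monotone), $a \preceq b$ in the prefix-sum order together with $\mathbf{1}^\top a = \mathbf{1}^\top b$ (equal total sums) implies $\mathrm{sort}(a) \preceq \mathrm{sort}(b)$ — but here the totals are not equal and the order is one-sided, so I would instead prove the slightly stronger statement that if $\sum_{j=1}^k a(j) \le \sum_{j=1}^k b(j)$ for all $k$ then $\sum_{j=1}^k \mathrm{sort}(a)(j) \le \sum_{j=1}^k \mathrm{sort}(b)(j)$ for all $k$, via the variational characterization $\sum_{j=1}^k \mathrm{sort}(b)(j) = \min_{|T|=k}\sum_{j\in T} b(j)$ is \emph{wrong} for the minimum over prefixes — rather, $\sum_{j=1}^k \mathrm{sort}(b)(j)$ is the minimum of $\sum_{j\in T} b(j)$ over all $k$-subsets $T$, while $\sum_{j=1}^k b(j)$ is one particular such sum; combining $\sum_{j=1}^k\mathrm{sort}(a)(j)\le\sum_{j=1}^k a(j)\le \sum_{j=1}^k b(j)$ does not immediately give the result, so I would complete it by an exchange argument moving from the identity permutation to the sorting permutation one transposition at a time, checking each transposition preserves the inequality. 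Once this monotonicity-of-sorting lemma is in hand, both parts of Lemma \ref{lem:eUeV} follow in a few lines.
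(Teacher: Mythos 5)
Your plan for part (2) rests on a general monotonicity-of-sorting claim that is false. You propose to reduce to the statement ``if $\sum_{j=1}^k a(j)\le\sum_{j=1}^k b(j)$ for all $k$ then $\sum_{j=1}^k \mathrm{sort}(a)(j)\le\sum_{j=1}^k \mathrm{sort}(b)(j)$ for all $k$,'' and you also float the variant with equal totals, $\mathbf{1}^\top a=\mathbf{1}^\top b$. Both fail: take $a=(1,1,1)$, $b=(3,0,0)$. The prefix sums of $a$ are $1,2,3$ and of $b$ are $3,3,3$, so $a\preceq b$ and the totals agree, yet $\mathrm{sort}(a)=(1,1,1)$ has first prefix sum $1$ while $\mathrm{sort}(b)=(0,0,3)$ has first prefix sum $0$, so $\mathrm{sort}(a)\not\preceq\mathrm{sort}(b)$. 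You do sense trouble here and propose to fall back to ``an exchange argument moving from the identity permutation to the sorting permutation one transposition at a time,'' but that step is not carried out, and no such argument can succeed without using the structural constraint that $u$ and $v$ are \emph{already} nondecreasing, so that $u+p$ and $v+p$ are only out of order by at most one position per adjacent pair. The whole content of part (2) is in exploiting this, and the blind proposal never does.

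The paper's actual proof of part (2) avoids the false abstraction entirely by factoring the passage from $u$ to $v$ into a chain of elementary moves. Algorithm \ref{alg:uTov} produces $u=u_0<u_1<\cdots<u_t=v$ where each step either bumps the last coordinate (when the totals differ) or increments one coordinate $u_s(j)$ and decrements a later one $u_s(k)$, with all intermediate coordinates equal between $u_s$ and $u_{s+1}$; this is the standard ``Muirhead / Robin Hood transfer'' decomposition of a majorization inequality. It then suffices to check $\even(u_s)\le\even(u_{s+1})$ for a single such transfer, which is done by locating where the modified pair sits after the $+1$ perturbation and resort. That stepwise verification is exactly the piece of bookkeeping your proposal postpones. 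Your part (1) sketch is roughly in the right spirit (the paper calls it ``a straightforward verification''), but part (2) as written has a genuine gap: the key lemma is false, and the fix you gesture at would need to be rebuilt around a transfer decomposition or some equally concrete use of the monotonicity of $u$ and $v$.
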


The proof of (1) is a straightforward verification. We defer
the proof of (2) until after describing how to transform $u$
into $v$ in manageable steps.
Close inspection will reveal that the proof in \cite{SW92} does
not find a transformation from $u=(0,1,2)$ to $v=(1,1,1)$; a
successful procedure is as follows.

\begin{alg}
\label{alg:uTov} (Transformation of $u\rightarrow u' \leq v$
with $u<u'$.)

\noindent Input: Position vectors $u=(u(1),\ldots,u(M))$ and
$v=(v(1),\ldots,v(M))$ with $u < v$.\\
Output: A position vector $u'$ with $u<u'\leq v$.\\
\noindent 0. \ Initialize $u'=u$.\\
\noindent 1. \ If $\sum_{i=1}^M u(i) < \sum_{i=1}^M v(i)$, then
set $u'(M)=u(M)+
\sum_{i=1}^M v(i) - \sum_{i=1}^M u(i)$.\\
\noindent 2. \ Otherwise, if $\sum_{i=1}^M u(i) = \sum_{i=1}^M v(i)$:\\
\phantom{mmm}\noindent 2a. \ Maximize $j$ such that $u(j)<v(j)$.\\
\phantom{mmm}\noindent 2b. \ Minimize $k>j$ such that $u(k)>v(k)$.\\
\phantom{mmm}\noindent 2c. \ Set $u'(j)=u(j)+1$ and $u'(k)=u(k)-1$.\\
(By design of $j$ and $k$, $u(j)<v(j),
u(j+1)=v(j+1),\ldots,u(k-1)=v(k-1),u(k)>v(k)$. Furthermore,
$u'$ is already in nondecreasing order.)
\end{alg}
\begin{proof}
The algorithm is easy to verify for $u'$ produced by Step 1.
Suppose Step 2 is executed.  Step 2a certainly produces a
maximum $j$: $u<v$ implies that
$\sum_{i=1}^{\ell}u(i)<\sum_{i=1}^{\ell}v(i)$ for some $\ell$,
and so at least one choice for $j$ with $u(j)<v(j)$ exists.
Step 2b produces a minimum $k$: using the $j$ from Step 2a and
combining the inequalities $\sum_{i=1}^{j-1}u(i)\leq
\sum_{i=1}^{j-1}v(i)$, $u(j)<v(j)$, and
$\sum_{i=1}^Mu(i)=\sum_{i=1}^M v(i)$ yields
$\sum_{i=j+1}^Mu(i)>\sum_{i=j+1}^Mv(i)$; and so there is at
least one choice of $k$ for which $u(k)>v(k)$. For all indices
$i$ strictly between $j$ and $k$, $u(i)<v(i)$ is impossible by
maximality of $j$, and $u(i)>v(i)$ is impossible by minimality
of $k$. The middle entries of $u$ and $v$ are as follows:
\begin{equation}\label{eqn:relationString}
u(j)<v(j),
u(j+1)=v(j+1),\ldots,u(k-1)=v(k-1),u(k)>v(k).
\end{equation}
It remains to verify that $u<u'\leq v$ for $u'$ constructed in
Step 2c. Already $u'$ is in nondecreasing order, by definition
of $u'$, inspection of \eqref{eqn:relationString}, and noting
that $u(j)<u'(j)\leq v(j)$ and $v(k)\leq u'(k)<u(k)$.
Furthermore, for $1\leq \ell\leq j-1$,
$\sum_{i=1}^{\ell}u(i)=\sum_{i=1}^{\ell}u'(i)\leq
\sum_{i=1}^{\ell}v(i)$. With $u(j)+1=u'(j)\leq v(j)$, we have
$1+\sum_{i=1}^{j}u(i)=\sum_{i=1}^{j}u'(i)\leq
\sum_{i=1}^{j}v(i)$. Since $u(i)=u'(i)=v(i)$ for all $j+1\leq
i\leq k-1$, we have $1+\sum_{i=1}^{\ell}u(i)=
\sum_{i=1}^{\ell}u'(i)\leq \sum_{i=1}^{\ell}v(i)$ for all
$j+1\leq \ell\leq k-1$.  With $v(k)\leq u'(k)=u(k)-1$, we have
$\sum_{i=1}^{k}u(i)= \sum_{i=1}^{k}u'(i)\leq
\sum_{i=1}^{k}v(i)$. Since $u\leq v$ and $u'(i)=u(i)$ for
$i>k$, $\sum_{i=1}^{\ell}u(i)=\sum_{i=1}^{\ell} u'(i)\leq
\sum_{i=1}^{\ell}v(i)$ for $k+1\leq \ell\leq M$.
\end{proof}


\begin{proof}[Proof of Lemma \ref{lem:eUeV} Part (2)]
Iterative application of Algorithm \ref{alg:uTov} produces a
sequence of position vectors $u=u_0<u_1<\cdots<u_t=v$. The
sequence terminates because there are a bounded number of
position vectors satisfying the precondition $\sum_{i=1}^M u(i)
= \sum_{i=1}^M v(i)$ to execute Step 2 of the algorithm. Now
let $0\leq s<t$ and consider $u_s<u_{s+1}$. If $u_{s+1}$ was
created by applying Step 1 of the algorithm to $u_s$ (thereby
forcing $s=0$), then $\even(u_s)\leq \even(u_{s+1})$ is easy to
verify.

Otherwise Step 2 created $u_{s+1}$ from $u_s$. Inspection of
\eqref{eqn:relationString} reveals that
$u_s(j)<u_{s+1}(j)=u_s(j)+1\leq u_{s+1}(k)=u_s(k)-1<u_s(k)$.
Ignoring for a moment the $j$th and $k$th entries of $u_s$ and
$u_{s+1}$, and applying $\even$ to all other entries and then
resorting, we have the following identical structure for
$\even(u_s)$ and $\even(u_{s+1})$:
$$
\begin{array}{cc|ccc|cc}
 \cdots & \leq u_s(j)+\chi_{2|j}
    & \geq u_s(j)+1 +\chi_{2|j}
    & \cdots & \leq u_s(k)-1 + \chi_{2|k}
    & \geq u_s(k) + \chi_{2|k} & \cdots\, .
\end{array}
$$
Here, $\chi_{2|j}$ ($\chi_{2|k}$) equals 1 if 2 divides $j$
($k$) and equals 0 otherwise; the vertical separators denote
that smaller entries lie to the left and larger to the right.
Now we can see that $\even(u_s)$ is the same as inserting
$u_s(j)+\chi_{2|j}$ and $u_s(k)+\chi_{2|k}$ from left to right
at the two separators without need for resorting.  Similarly,
$\even(u_{s+1})$ is the same as inserting $u_s(j)+1+\chi_{2|j}$
and $u_s(k)-1+\chi_{2|k}$ from left to right at the two
separators without need for resorting. With this observation it
is simple to verify that $\even(u_s) < \even(u_{s+1})$.

Since $s$ was arbitrary in the preceding argument, we have
$\even(u)=\even(u_0)<\even(u_1)<\cdots<\even(u_t)=\even(v)$,
and so combined with the case $t=0$ for which
$\even(u)=\even(v)$, Part (2) of the lemma holds.
\end{proof}

\begin{cor}\label{cor:oUoV}
Let $u$ and $v$ be position vectors of liar games with the same
number of elements on the binary symmetric channel. If $u\leq
v$, then $\odd(u)\leq \odd(v)$.
\end{cor}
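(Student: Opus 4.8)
The plan is to reduce Corollary~\ref{cor:oUoV} to Part~(2) of Lemma~\ref{lem:eUeV} by a ``dummy element'' trick, using that $\odd$ on $M$ elements is, after a harmless shift, the same as $\even$ on $M+1$ elements. Given a position vector $u=(u(1),\dots,u(M))$, form $\tilde u=(0,u(1),\dots,u(M))$, the position vector with $M+1$ entries obtained by prepending a new element carrying zero lies; since the entries of $u$ are nonnegative, $\tilde u$ is still nondecreasing, hence a genuine position vector of a liar game with $M+1$ elements on the binary symmetric channel. The crux is the identity
\[
\even(\tilde u) = (0)\oplus\odd(u),
\]
where $\oplus$ denotes concatenation: applying $\even$ to $\tilde u$ adds a lie exactly to the entries in the even positions $2,4,6,\dots$ of $\tilde u$, which are the odd positions $1,3,5,\dots$ of $u$, while the leading $0$ (in odd position $1$) receives no lie and, being a global minimum, stays in front after re-sorting. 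One checks the last entry is handled consistently with Definition~\ref{def:oeu}, since position $M+1$ of $\tilde u$ receives a lie iff $M+1$ is even iff $M$ is odd, matching the rule that $u(M)$ gets a lie in $\odd(u)$ iff $M$ is odd.

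Next I would record the two elementary partial-order facts about prepending. First, $u\le v$ implies $\tilde u\le\tilde v$, because the $k$th partial sum of $\tilde u$ equals the $(k-1)$st partial sum of $u$ (and $0$ when $k=1$), so $\sum_{i=1}^k\tilde u(i)\le\sum_{i=1}^k\tilde v(i)$ for every $k$. Second, $(0)\oplus\odd(u)\le(0)\oplus\odd(v)$ implies $\odd(u)\le\odd(v)$: comparing $k$th partial sums for $k=2,\dots,M+1$ gives $\sum_{j=1}^{\ell}\odd(u)(j)\le\sum_{j=1}^{\ell}\odd(v)(j)$ for all $\ell=1,\dots,M$, which is exactly the definition of $\odd(u)\le\odd(v)$. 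With these in hand the corollary is a short chain: $u\le v\Rightarrow\tilde u\le\tilde v\Rightarrow\even(\tilde u)\le\even(\tilde v)$ by Lemma~\ref{lem:eUeV}(2), and then $(0)\oplus\odd(u)\le(0)\oplus\odd(v)\Rightarrow\odd(u)\le\odd(v)$.

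The only delicate point is the bookkeeping in the displayed identity: matching the parity conventions of Definition~\ref{def:oeu} for $M$ versus $M+1$ entries, and confirming that the prepended $0$ is never moved by a re-sort (it is not, since it is a global minimum that never receives a lie). If one prefers to avoid the shift altogether, an equally valid but more laborious route is to repeat the proof of Lemma~\ref{lem:eUeV}(2) verbatim with $\odd$ in place of $\even$ throughout; Algorithm~\ref{alg:uTov} and the ``insert at the separators without re-sorting'' argument are insensitive to which of the two parities is incremented. The dummy-element reduction is shorter and re-derives nothing, so that is the approach I would take.
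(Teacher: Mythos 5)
Your proof is essentially the paper's own argument: prepend a dummy element that never receives a lie, so that $\even$ on the augmented vector realizes $\odd$ on the original, then apply Lemma~\ref{lem:eUeV}(2) and strip the dummy. The only difference is that the paper prepends $-2$ rather than $0$; your choice of $0$ is in fact slightly cleaner, since $(0,u(1),\dots,u(M))$ remains a genuine position vector in $\mathbb{N}^{M+1}$ (whereas $-2$ forces one to implicitly extend Lemma~\ref{lem:eUeV}(2) to integer vectors), and your observation that the leading $0$ stays in front as a global minimum correctly disposes of any tie-breaking worries.
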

\begin{proof}
We use a trick to piggyback on Lemma \ref{lem:eUeV} Part (2).
Set $u'=(-2,u(1),\ldots,u(M))$ and $v'=(-2,v(1),\ldots,v(M)$
and observe that $u\leq v$ implies $u'\leq v'$.  The first
entry of $u'$ and of $v'$ is sufficiently separated, and so
$\even(u')=(-2,\odd(u))$ and $\even(v')=(-2,\odd(v))$. Applying
Lemma \ref{lem:eUeV}  to $u'$ and $v'$ yields $\even(u')\leq
\odd(v')$.  As $\even(u')(1)=\even(v')(1)=-2$, this forces
$\odd(u)\leq \odd(v)$.
\end{proof}

Next we show that when Paul's strategy is to always ask the
alternating question, Carole's best possible response strategy
in the pathological liar game is to move the odd-numbered
elements.  This will provide an upper bound on the minimum
number of elements required for Paul to have a winning strategy
in the $(x,n,e)^*_2$-game.

\begin{theorem}\label{thm:CaroleStrat}
Let $x$ be an initial state vector, and $n,e\in \mathbb{N}$.
Assume that Paul always asks the alternating question. In the
$(x,n,e)^*_2$-game, Carole's best strategy is to move the
odd-numbered elements.
\end{theorem}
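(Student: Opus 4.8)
The plan is to show that, among all of Carole's responses, moving the odd-numbered elements produces a position vector that is $\leq$ every other attainable position vector in the weak-majorization partial order, and then to argue that this ``smallest'' position vector is the worst possible outcome for Paul, i.e. the one that makes it hardest for him to keep an element alive. The first half is essentially Lemma \ref{lem:eUeV} Part (1): since Paul asks the alternating question at every round, the two position vectors available to Carole at a given round are $\even(u)$ and $\odd(u)$, and Part (1) gives $\even(u)\leq \odd(u)$. Thus at a single round, $\even$ (moving the odd-numbered elements) is the $\leq$-smaller outcome; the content of the theorem is that this remains true after iterating for all $n$ rounds.

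First I would set up an induction on the number of rounds. Suppose that after $s$ rounds the ``Carole-moves-odd'' strategy has produced a position vector $u_s$, and that for \emph{any} sequence of Carole's responses the resulting position vector $v_s$ satisfies $u_s\leq v_s$. For round $s+1$: Paul again asks the alternating question, so from $u_s$ Carole's ``move-odd'' response yields $u_{s+1}=\even(u_s)$, while from $v_s$ an arbitrary response yields either $\even(v_s)$ or $\odd(v_s)$. We need $u_{s+1}\leq \even(v_s)$ and $u_{s+1}\leq \odd(v_s)$. For the first, apply Lemma \ref{lem:eUeV} Part (2) to $u_s\leq v_s$ to get $\even(u_s)\leq\even(v_s)$. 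For the second, combine $\even(u_s)\leq\even(v_s)$ with Lemma \ref{lem:eUeV} Part (1), which gives $\even(v_s)\leq\odd(v_s)$ (using Corollary \ref{cor:oUoV} is an alternative route, via $\even(u_s)\leq\even(v_s)$ together with $\odd$-monotonicity plus Part (1)); either way transitivity of $\leq$ closes the step. The base case $s=0$ is trivial since both runs start from the same state vector $x$, hence the same position vector.

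Having established that the ``move-odd'' run yields a position vector $u_n$ with $u_n\leq v_n$ for every play $v_n$ of Carole, the last point is to translate $\leq$-minimality into ``best for Carole.'' Here I would observe that the number of surviving (non-disqualified) elements is determined by how many entries of the position vector are $\leq e$, and that if $u\leq v$ then $u$ has at least as many entries $\leq e$ as $v$ does: indeed, since both vectors are monotone nonnegative integer vectors with the same length $M$ and $\sum_{i=1}^k u(i)\leq\sum_{i=1}^k v(i)$ for all $k$, any ``prefix'' of small values in $v$ is dominated by a correspondingly small prefix in $u$. Concretely, if $v$ has $m$ entries that are $\leq e$, I would argue $u(m)\leq v(m)\leq e$ by a short counting/averaging argument on partial sums, so $u$ has at least $m$ entries $\leq e$. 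Wait—that direction is what we want: more survivors for the $\leq$-smaller vector is \emph{good} for Paul, so the $\leq$-smallest outcome is \emph{worst} for Paul and hence \emph{best} for Carole. Thus the ``move-odd'' strategy minimizes the number of survivors over all of Carole's strategies, which is exactly the assertion.

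The main obstacle I anticipate is the clean statement and proof of the last translation step: one must be careful that ``Paul can win'' is a two-player statement (Paul gets to choose questions after seeing responses), but here Paul's strategy has been \emph{fixed} to the alternating question, so the game collapses to Carole choosing a response sequence, and the only quantity that matters is the final count of entries $\leq e$ in the position vector. Making precise that $u\leq v$ (weak majorization) implies ``$u$ has at least as many coordinates $\leq e$'' for monotone integer vectors of equal length is elementary but needs a careful one-line argument (e.g. if $v$'s first $m$ entries are all $\leq e$ then $\sum_{i=1}^m u(i)\leq \sum_{i=1}^m v(i)\leq me$, and monotonicity of $u$ forces $u(m)\leq e$). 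Everything else is bookkeeping with Lemma \ref{lem:eUeV}, Corollary \ref{cor:oUoV}, and transitivity of the partial order.
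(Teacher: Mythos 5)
Your induction is structurally sound, but it proves the wrong thing, and there are two sign errors that your ``Wait'' aside actually exposes without resolving.

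\textbf{Misidentified operator.} You equate ``moving the odd-numbered elements'' with $\even$, but Definition~\ref{def:oeu} adds lies at odd indices $1,3,5,\ldots$ in $\odd(u)$, so ``move the odd-numbered elements'' \emph{is} $\odd$, not $\even$. Your induction therefore correctly establishes that $\even^n(u_0)\leq v$ for every leaf $v$, i.e.\ that ``always move the \emph{even}-numbered elements'' yields the $\leq$-minimum leaf. That is a true statement, but it is not about the strategy in the theorem.

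\textbf{Backwards translation step.} You correctly argue that $u\leq v$ forces $u$ to have at least as many coordinates $\leq e$ as $v$ (the averaging argument is fine). But you then conclude that the $\leq$-smallest leaf is ``worst for Paul and hence best for Carole,'' which flatly contradicts the sentence before it: more survivors is \emph{good} for Paul, so the $\leq$-smallest leaf is \emph{best} for Paul and \emph{worst} for Carole. Carole wins iff $u_n(1)>e$, and the majorization order has $u\leq v\Rightarrow u(1)\leq v(1)$, so Carole wants the $\leq$-\emph{largest} leaf. Knowing where the minimum lies does not by itself locate the maximum, so your argument, even corrected for the operator mislabel, does not establish what the theorem asks.

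\textbf{What is actually needed.} The proof must show $\odd^n(u_0)\geq v$ for every leaf $v$. This is the mirror image of your induction: use Lemma~\ref{lem:eUeV}~(1) for the base case ($\even(u_0)\leq\odd(u_0)$), and in the inductive step, assume $v\leq\odd^s(u_0)$, apply Corollary~\ref{cor:oUoV} to get $\odd(v)\leq\odd^{s+1}(u_0)$, and use Part~(1) plus transitivity to get $\even(v)\leq\odd(v)\leq\odd^{s+1}(u_0)$. Then the translation is straightforward: the $\leq$-maximum leaf has the largest $u_n(1)$, so Carole does no worse by always choosing $\odd$. Your two errors cancel in the final sentence, making the conclusion accidentally read correctly, but the intermediate reasoning does not support it.
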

\begin{proof}
Let $u_s$ be the position vector after $s$ rounds of the game,
where $u_0$ is the position vector corresponding to the initial
state vector $x$. Carole wins the $(x,n,e)^*_2$-game iff
$u_n(1)>e$. Consider the $2^n$ leaves of the strategy tree of
the game determined by every possible length $n$ sequence of
choices for Carole to select $\odd(u_s)$ or $\even(u_s)$ to
complete round $s+1$. Thus $\odd^n(u_0)$ is the leaf
corresponding to Carole always moving the odd elements. It
suffices to show that $\odd^n(u_0)\geq v$ for all other leaves
$v$ of the strategy tree. We prove this by induction on $n$.
The base case $n=1$ is provided by Lemma \ref{lem:eUeV} Part
(1).  Now let $0<s<n$, assume that $v$ is a position vector
after $s$ rounds, and assume that $v\leq \odd^s(u_0)$. By
Corollary \ref{cor:oUoV}, $\odd(v)\leq
\odd(\odd^s(u_0))=\odd^{s+1}(u_0)$, and by Lemma \ref{lem:eUeV}
Part (1) and transitivity, $\even(v)\leq \odd^{s+1}(v)$. All
position vectors after $s+1$ rounds are obtained by applying
$\odd$ or $\even$ to a position vector after $s$ rounds, and so
the induction succeeds.
\end{proof}

%
%
%

By a simple transformation, Carole's odd response strategy is
equivalent to the time-evolution of the liar machine.

\begin{cor}\label{cor:liarGameToLiarMachine}
Let the liar machine have initial configuration $f_0$ with $M$
chips at the origin and none elsewhere.  If
$\sum_{i=-n}^{-n+2e}f_n(i)\geq 1$, then Paul can win the
$((M,0,\ldots,0),n,e)_2^*$-game.
\end{cor}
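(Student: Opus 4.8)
The plan is to combine Theorem~\ref{thm:CaroleStrat} with an explicit identification of the liar machine, started with $M$ chips at the origin, with the position-vector process $u_0 \to \odd(u_0) \to \odd^2(u_0) \to \cdots$ that arises when Paul always asks the alternating question and Carole always moves the odd-numbered elements. The dictionary is the affine change of coordinates sending an element that carries $\ell$ lies after $s$ rounds to the integer $2\ell - s$: a round in which an element receives a lie advances its coordinate by $+1$ (a step right), and a round in which it does not advances it by $-1$ (a step left). Under this dictionary the initial state $(M,0,\ldots,0)$ becomes $M$ chips at the origin, and after $n$ rounds an element carrying $\ell$ lies sits at coordinate $2\ell - n$; so that element survives (i.e.\ $\ell \le e$) if and only if its coordinate lies in $\{-n,-n+2,\ldots,-n+2e\}$, equivalently (since lies are nonnegative) in the interval $[-n,\,-n+2e]$.

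First I would verify that one step of the liar machine implements exactly one application of $\odd$ under this dictionary. Label the chips $c_1,\dots,c_M$ from left to right; since each pile is split into a left half and a right half and piles come from consecutive coordinates, the left-to-right order of the chips is preserved, so this labelling remains consistent over time. The chips at a common coordinate form a block of consecutive indices $\{j+1,\dots,j+k\}$; applying $\odd$ (Definition~\ref{def:oeu}) adds a lie to the odd-indexed chips, sending them right, and leaves the even-indexed chips fixed, sending them left. When $k$ is even this sends $\lfloor k/2\rfloor$ chips each way; when $k$ is odd it leaves the top-indexed chip $c_{j+k}$ as the ``extra'' one, which goes right if $j+k$ is odd and left if $j+k$ is even. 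This is precisely the liar machine's rule together with its stated tie-break. Hence, by induction on $t$, the coordinate of $c_j$ at time $t$ equals $2\,[\odd^t(u_0)](j) - t$, where $u_0=(0,\ldots,0)$ is the position vector of the state $(M,0,\ldots,0)$; in particular the leftmost chip $c_1$ realizes the minimum entry of $\odd^t(u_0)$.

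Next I would invoke Theorem~\ref{thm:CaroleStrat}: when Paul always asks the alternating question, the leaf $\odd^n(u_0)$ dominates every other leaf $v$ of the strategy tree in the position-vector partial order, so in particular $[\odd^n(u_0)](1) \ge v(1)$ for every leaf $v$. As noted in the proof of that theorem, Carole wins from a given leaf exactly when its first coordinate exceeds $e$; therefore, if $[\odd^n(u_0)](1)\le e$, then $v(1)\le [\odd^n(u_0)](1)\le e$ for every leaf $v$, and the alternating question is a winning strategy for Paul. Translating back through the dictionary, $[\odd^n(u_0)](1)\le e$ is equivalent to the leftmost chip of the time-$n$ liar machine configuration lying at a coordinate $\le -n+2e$, which (again because every chip started at the origin, so no chip is left of $-n$) is equivalent to $\sum_{i=-n}^{-n+2e} f_n(i)\ge 1$. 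This is exactly the hypothesis of the Corollary, so the Corollary follows.

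The main obstacle is the bookkeeping in the second paragraph: one must check carefully that the left-to-right chip order is genuinely preserved by the splitting rule (so that the re-indexing after each step is unambiguous and the parity-based tie-break of the liar machine matches the index-parity convention built into $\odd$), and that the resorting in Definition~\ref{def:oeu} may be taken to respect this order so that the induction closes. None of this is deep, but it is precisely the point at which the otherwise mysterious tie-breaking rule of the liar machine is needed; everything else rests on the already-established optimality of Carole's odd response in Theorem~\ref{thm:CaroleStrat}.
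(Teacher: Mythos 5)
Your proposal is correct and takes essentially the same route as the paper's proof: both identify the liar machine with the iterated $\odd$ operation via the coordinate change $x_s(i)=f_s(-s+2i)$ (your $\ell\mapsto 2\ell-s$) and then invoke Theorem~\ref{thm:CaroleStrat} to conclude that the alternating-question strategy wins whenever $\odd^n(u_0)(1)\leq e$. The only difference is that you verify in detail the bookkeeping matching the liar machine's index-parity tie-break to the definition of $\odd$, which the paper leaves to the reader.
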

\begin{proof}
Let $u_s$ and $x_s$ be the position and state vectors,
respectively at the end of round $s$, of the
$((M,0,\ldots,0),n,e)_2^*$-game in which Paul always asks the
alternating question, and Carole always chooses
$u_{s+1}=\odd(u_s)$. By Theorem \ref{thm:CaroleStrat}, we need
only transform $\odd^s(u_0)$ into $f_s$, where $u_0$ is the
position vector corresponding to the initial state vector
$(M,0,\ldots,0)$.  By definition of $\odd(u)$ and of one step
of the liar machine, this is accomplished by observing that
$x_s(i) 
=f_s(-s+2i)$ for all $0\leq i\leq s$. Consequently
$\odd^n(u_0)(1)\leq e$ iff $\sum_{i=0}^ef_n(-n+2i)\geq 1$.
\end{proof}
The converse is not true.  For some games the alternating
question strategy is not optimal, so that Paul has a winning
strategy, but $\odd^n(u_0(1))>e$. For example, Paul can win the
$((1,11),4,1)^*_2$-game (as the reader can readily verify --
the first question is $(1,4)$), but the progression of
configurations given by the liar machine is $(1,11)\rightarrow
(0,7)\rightarrow (0,3)\rightarrow (0,1)\rightarrow (0,0)$.

We again use the following notation. Let $n \rightarrow
\infty$, fix $f \in (0,1/2)$, and set $ n_1 = n -
\floor{\frac{4}{(1 - 2f)^2} \log \log n}$ and $n_2 = n - n_1$.
Define $F=\lfloor f n\rfloor$, $F_1 = \lfloor f n_1\rfloor$,
and $F_2 = F-F_1$.

\begin{theorem}\label{ref:liarMachineBound}
Let $n,M\in \mathbb{Z}^+$.  Let
$f_0:\mathbb{Z}\rightarrow\mathbb{N}$ be the initial
configuration of the liar machine defined by $f_0(0)=M$, and
$f_0(j)=0$ otherwise.
For $n$ sufficiently large, if
$$
M \geq \frac{2^n}{\binom{n}{\leq F}}
    (2+o(1))c'\sqrt{n_2}
    ,
$$
where $c'$ is the constant from Theorem
\ref{thm:intervalupperbound}, then $\sum_{i=F_1}^{F} f_n(-n+2i)
\geq 1$.
\end{theorem}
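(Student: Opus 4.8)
The plan is to run the liar machine in two stages of $n_1$ and $n_2=n-n_1$ steps, using the pointwise bound of Theorem~\ref{thm:ptwiseupperbound} to pin down the configuration after the first stage, the interval bound of Theorem~\ref{thm:intervalupperbound} to pass from the second stage back to the linear machine, and Proposition~\ref{prop:HyperGeomApprox} to evaluate the resulting main term. Note first that the liar machine is Markovian --- $f_{t+1}$ depends on $f_t$ alone, through the parities $\chi_t$ --- and that $f_{n_1}$ lives on a single residue class modulo $2$; so running $n$ steps from $f_0$ is the same as running $n_2$ steps of a liar machine from the configuration $f_{n_1}$, to which both Theorems~\ref{thm:ptwiseupperbound} and \ref{thm:intervalupperbound} apply. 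Set $I=[-n+2F_1,\,-n+2F]\subset\Z$. Since $f_0$ is concentrated at the origin, $f_n$ and $\cL^{n_2}f_{n_1}$ vanish off the residue class of $n$ modulo $2$, so $f_n(I)=\sum_{i=F_1}^{F}f_n(-n+2i)$, and it suffices to show $f_n(I)\ge1$. As $B:=b-a=2F_2=2fn_2+O(1)>\sqrt{n_2}/2$ for large $n$ (recall $n_2\to\infty$), Theorem~\ref{thm:intervalupperbound} applied to the second stage gives $f_n(I)\ge\cL^{n_2}f_{n_1}(I)-c'\sqrt{n_2}$, so it remains to prove $\cL^{n_2}f_{n_1}(I)\ge c'\sqrt{n_2}+1+o(1)$.

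Using $\cL^{n_2}\delta_0(j)=2^{-n_2}\binom{n_2}{(n_2+j)/2}$ one expands
$$
\cL^{n_2}f_{n_1}(I)=\sum_{k=F_1}^{F}\;\sum_{s}f_{n_1}(-n_1+2s)\,2^{-n_2}\binom{n_2}{k-s}\;\ge\;\sum_{k=F_1}^{F}\;\sum_{s=F_1}^{k}f_{n_1}(-n_1+2s)\,2^{-n_2}\binom{n_2}{k-s},
$$
the inequality by discarding the nonnegative terms with $s<F_1$. Since $g_{n_1}(-n_1+2s)=M2^{-n_1}\binom{n_1}{s}$, Theorem~\ref{thm:ptwiseupperbound} (at time $n_1\ge2$) gives $f_{n_1}(-n_1+2s)\ge M2^{-n_1}\binom{n_1}{s}-12\log n_1$, so
$$
\cL^{n_2}f_{n_1}(I)\;\ge\;M2^{-n}\sum_{k=F_1}^{F}\sum_{s=F_1}^{k}\binom{n_1}{s}\binom{n_2}{k-s}\;-\;12\log n_1\cdot E,\qquad E:=\sum_{k=F_1}^{F}\sum_{s=F_1}^{k}2^{-n_2}\binom{n_2}{k-s}.
$$

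By Proposition~\ref{prop:HyperGeomApprox}, the first term is $M2^{-n}(\tfrac12-o(1))\binom{n}{\le F}$, which by the hypothesis on $M$ is at least $(2+o(1))c'\sqrt{n_2}\cdot(\tfrac12-o(1))=(1+o(1))c'\sqrt{n_2}$. For the error term, substituting $m=k-s$ gives $E\le(F_2+1)\,2^{-n_2}\binom{n_2}{\le F_2}$; the estimates used in the proofs of Lemmas~\ref{cutoffLemma} and \ref{lem:relativeCDF} --- that $\binom{n_2}{\le F_2}=O(1)\binom{n_2}{F_2}$ and $\binom{n_2}{F_2}=2^{H(f)n_2+O(\log n_2)}$, together with $n_2=\tfrac{4}{(1-2f)^2}\log\log n+O(1)$ --- yield $2^{-n_2}\binom{n_2}{\le F_2}=(\log n)^{-\alpha+o(1)}$, where $\alpha=\tfrac{4\log2}{(1-2f)^2}(1-H(f))>2$ is the very quantity appearing in Lemma~\ref{lem:relativeCDF}. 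As $F_2=O(\log\log n)$ and $\log n_1=O(\log n)$, this gives $12\log n_1\cdot E=(\log n)^{1-\alpha+o(1)}=o(1)$. Combining, $\cL^{n_2}f_{n_1}(I)\ge(1+o(1))c'\sqrt{n_2}-o(1)$, which (choosing the $o(1)$ in the hypothesis $M\ge\frac{2^n}{\binom{n}{\le F}}(2+o(1))c'\sqrt{n_2}$ appropriately, so that the factor $(1+o(1))$ absorbs the additive $+1$ and the vanishing slack) exceeds $c'\sqrt{n_2}+1$; hence $f_n(I)\ge1$.

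The main obstacle is the error analysis in the displays above: one must ensure that the pointwise discrepancy $12\log n_1$, summed over the $\Theta(\log\log n)$ relevant buckets $s\in[F_1,F]$, stays negligible against a main term of order $c'\sqrt{n_2}=\Theta(\sqrt{\log\log n})$. This succeeds precisely because $f<1/2$ forces the left binomial tail $2^{-n_2}\binom{n_2}{\le F_2}$ to have size $(\log n)^{-\alpha}$ with $\alpha>2$ --- the same strict inequality that drives Lemma~\ref{lem:relativeCDF} --- so that $12(\log n)\cdot\Theta(\log\log n)\cdot(\log n)^{-\alpha}\to0$. The remaining points are routine: matching the two-stage index ranges to those of Proposition~\ref{prop:HyperGeomApprox}, and checking that $B>\sqrt{n_2}/2$ so that the first (rather than the second) branch of Theorem~\ref{thm:intervalupperbound} applies.
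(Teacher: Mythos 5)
Your proof is correct and takes essentially the same two-stage route as the paper: run $n_1$ steps, invoke Theorem~\ref{thm:ptwiseupperbound} to pin down $f_{n_1}$ on the bucket range $[F_1,F]$, run $n_2$ more linear steps, invoke Proposition~\ref{prop:HyperGeomApprox} for the main hypergeometric sum, and invoke Theorem~\ref{thm:intervalupperbound} (first branch, since $B=2F_2>\sqrt{n_2}/2$) to compare the stage-two liar machine with the stage-two linear machine. The one cosmetic difference is in how the stage-one discrepancy $12\log n_1$ is discharged: the paper defines a truncated lower-bound configuration $h_{n_1}$ and absorbs the error \emph{multiplicatively} into the $(2+o(1))$ factor by citing Lemma~\ref{lem:relativeCDF} to show $g_{n_1}(-n_1+2j)=\omega(\log n)$ on $j\in[F_1,F]$, whereas you keep the error \emph{additive}, bound the convolution weight $E$ by $(F_2+1)\,2^{-n_2}\binom{n_2}{\leq F_2}$, and show directly that $12\log n_1\cdot E=(\log n)^{1-\alpha+o(1)}\to 0$. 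Both closings rest on the same quantitative fact, $\alpha=\tfrac{4\log 2}{(1-2f)^2}(1-H(f))>2$, which is precisely what drives Lemma~\ref{lem:relativeCDF}; your explicit tail computation makes the mechanism a bit more transparent at the cost of a few extra lines.
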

\begin{proof}
Set $g_0=f_0$ and let $g_s$ be the chip distribution in the
linear machine after $s$ rounds.  Then for $F_1 \leq j \leq F$,
the number of chips at position $-n_1+2j$ in the linear machine
after $n_1$ rounds is
\begin{equation}\label{eqn:n1Linear}
g_{n_1}(-n_1+2j) = \frac{\binom{n_1}{j}}{2^{n_1}}
    \frac{2^{n}}{\binom{n}{\leq F}}(2+o(1))c'\sqrt{n_2}
    .
\end{equation}
Since $F<n_1/2$ for $n$ sufficiently large, the minimum occurs
at $j=F_1$, and is $\omega(\log n)$ by Lemma
\ref{lem:relativeCDF}.
Applying Theorem \ref{thm:ptwiseupperbound}, for $F_1\leq j\leq
F$ we have
\begin{equation}\label{eqn:n1Liar}
f_{n_1}(-n_1+2j) \geq
\frac{\binom{n_1}{j}}{2^{n_1}}
    \frac{2^{n}}{\binom{n}{\leq F}}(2+o(1))c'\sqrt{n_2}.
\end{equation}
Now for $F_1 \leq j \leq F$, define $h_{n_1}(-n_1+2j)$ to be
the right-hand side of \eqref{eqn:n1Liar}, and $h_{n_1}(j)=0$
elsewhere. Thus $h_{n_1}$ is obtained from $f_{n_1}$ by
removing chips outside of the interval $[-n_1+2F_1,-n_1+2F]$.
We run the linear machine with initial state $h_{n_1}$ for
$n_2$ rounds, and obtain for $F_1 \leq i \leq F$ that
$$
h_n(-n+2i) \geq
    \sum_{j=F_1}^{i}
    \frac{\binom{n_1}{j}}{2^{n_1}}
    \frac{2^{n}}{\binom{n}{\leq F}}(2+o(1))c'\sqrt{n_2}
     \frac{\binom{n_2}{i-j}}{2^{n_2}},
$$
as for $i$ and $j$ fixed, the contribution to $h_n(-n+2i)$ from
$h_{n_1}(-n+2j)$ is $h_{n_1}(-n+2j)\binom{n_2}{i-j}/2^{n_2}$.
Summing $h_n(-n+2i)$ over $i$ and applying Proposition
\ref{prop:HyperGeomApprox},
$$
\sum_{i=F_1}^{F}
    h_n(-n+2i)\geq c'\sqrt{n_2}(1+o(1)).
$$
Noting that $\sqrt{n_2} = o(F-F_1)$ and applying Theorem
\ref{thm:intervalupperbound} to $h_{n_1}$, we obtain
$\sum_{i=F_1}^{F} f_n(-n+2i) \geq 1 $ as desired.
\end{proof}

\begin{proof}[Proof of Theorem \ref{thm:pathGameBound}]
Corollary \ref{cor:liarGameToLiarMachine} reduces the
$((M,0,\ldots,0),n,e)_2^*$-game to the liar machine with
winning condition $\sum_{i=-n}^{-n+2e}f_n(i)\geq 1$, which
Theorem \ref{ref:liarMachineBound} shows is satisfied for the
given form of $M$.
\end{proof}

\section{Concluding remarks\label{sec:conclusion}}

The major open question is whether the time-evolution of the
liar machine with $M$ elements at the origin and zero elsewhere
can be given in closed form, or at least whether the leftmost
chip can be tracked more tightly.  Either case would yield an
improvement by decreasing the minimum $M$ for which Paul can
win the $((M,0,\ldots,0),n,e)^*_2$-game.  We suppose that the
best hope is for the optimal $M$ to be asymptotically a
constant multiple above the sphere bound.  Similarly, by the
reduction in \cite{SW92} from the $((M,0,\ldots,0),n,e)_2$-game
(original liar game) to the linear machine, improved tracking
of the leftmost chip could provide an alternative proof of
Theorem 3 of \cite{Z76}, which is equivalent to a lower bound
on $M$ for which Paul can win the original liar game.
Optimistically, the bound in \cite{Z76} on $M$ might be
improved to a constant multiple below the sphere bound.

We thank Joel Spencer for discussions that helped to
crystallize the ideas for this paper -- with the first author
during an extended collaboration on deterministic random walks,
and with the second author at a conference in 2004 on alternate
viewpoints for the liar game.

\end{document}